\newcommand{\C}{\mathbb{C}}
\newcommand{\F}{\mathbb{F}}
\newcommand{\Q}{\mathbb{Q}}
\newcommand{\R}{\mathbb{R}}
\newcommand{\Z}{\mathbb{Z}}
\newcommand{\ba}{\mathbf{a}}
\newcommand{\cA}{\mathcal{A}}
\newcommand{\cB}{\mathcal{B}}
\DeclareMathOperator{\Li}{Li}
\DeclareMathOperator{\sqf}{sqf}
\newtheorem{theorem}{Theorem}[section]
\newtheorem{lemma}[theorem]{Lemma}
\newtheorem{corollary}[theorem]{Corollary}
\newtheorem{definition}[theorem]{Definition}
\newcounter{stagcounter}[section]
\title{Squarefree discriminants of polynomials with prime coefficients}
\author{Valentio Iverson, Gian Cordana Sanjaya, Xiaoheng Wang}
\date{}
\begin{document}

\maketitle

\begin{abstract}
In this paper, we consider the family of monic polynomials with prime coefficients and the family of all polynomials with prime coefficients. We determine the number of $f(x)$ in each of these families having: squarefree discriminant; $\Z[x]/(f(x))$ as the maximal order in $\Q[x]/(f(x))$. 
\end{abstract}

\section{Introduction}

The problem of determining the number of $n$-tuples $\ba$ of integers such that $P(\ba)$ is squarefree, for some fixed polynomial $P\in\Z[x_1,\ldots,x_n]$, having height $H(\ba) < X$, is a classical one. There have been multiple new results in recent years (\cite{BS2,BS3,BS4,BS5,BSW1,BSW2,Oll,SW}) using new techniques to obtain bounds for the tail estimate:
$$\#\{\ba\in\Z^n\colon H(\ba) < X, m^2\mid F(\ba)\mbox{ for some squarefree }m>M\}.$$
In this paper, we show that the same tail estimates can be used to determine the number of $n$-tuples $\ba$ of \emph{primes} such that $P(\ba)$ is squarefree. As a consequence of the tail estimates proved in \cite{BSW1,BSW2}, we prove the following results.

\begin{theorem}\label{thm:polydisc}
    For each $n \geq 2$ and real number $X>0$, let
    \begin{eqnarray*}
        V_n^m(X) &=& \{x^n + a_1x^{n-1} + \cdots a_n\colon a_i < X^i\mbox{ is prime, for all }i=1,\ldots,n\},\\
        V_n(X) &=& \{a_0x^n + a_1x^{n-1} + \cdots a_n\colon a_i < X\mbox{ is prime, for all }i=0,\ldots,n\},\\
        N_n^{m,\sqf}(X) &=& \#\{f\in V_n^m(X)\colon \Delta(f)\mbox{ is squarefree}\},\\
        N_{n,2}^{m,\sqf}(X) &=& \#\{f\in V_n^m(X)\colon \mbox{the odd part of }\Delta(f)\mbox{ is squarefree}\},\\
        N_n^{\sqf}(X) &=& \#\{f\in V_n(X)\colon \Delta(f)\mbox{ is squarefree}\},\\
        N_{n,2}^{\sqf}(X) &=& \#\{f\in V_n(X)\colon \mbox{the odd part of }\Delta(f)\mbox{ is squarefree}\},\\
        N_n^{m,\max}(X) &=& \#\{f\in V_n^m(X)\colon \Z[x]/(f(x))\mbox{ is the maximal order of }\Q[x]/(f(x))\},\\
        N_{n,2}^{m,\max}(X) &=& \#\{f\in V_n^m(X)\colon \Z[x]/(f(x))\mbox{ is the maximal order of }\Q[x]/(f(x))\mbox{ except possibly at }2\},\\
        N_n^{\max}(X) &=& \#\{f\in V_n(X)\colon \Z[x]/(f(x))\mbox{ is the maximal order of }\Q[x]/(f(x))\},\\
        N_{n,2}^{\max}(X) &=& \#\{f\in V_n(X)\colon \Z[x]/(f(x))\mbox{ is the maximal order of }\Q[x]/(f(x))\mbox{ except possibly at }2\}.
    \end{eqnarray*}
    Then for any real number $A>0$, we have
    \begin{eqnarray*}
        N_n^{m,\sqf}(X) &=& C_n^{\sqf} \prod_{i = 1}^n \Li(X^i) + O_{n, A}\left(\frac{X^{n(n + 1)/2}}{(\log X)^A}\right), \\
        N_{n, 2}^{m,\sqf}(X) &=& C_{n, 2}^{\sqf} \prod_{i = 1}^n \Li(X^i) + O_{n, A}\left(\frac{X^{n(n + 1)/2}}{(\log X)^A}\right),\\
        N_n^{\sqf}(X) &=& C_n^{\sqf} \prod_{i = 0}^n \Li(X) + O_{n, A}\left(\frac{X^{n + 1}}{(\log X)^A}\right), \\
        N_{n, 2}^{\sqf}(X) &=& C_{n, 2}^{\sqf} \prod_{i = 0}^n \Li(X) + O_{n, A}\left(\frac{X^{n + 1}}{(\log X)^A}\right),\\
    \end{eqnarray*}
    \begin{eqnarray*}
        N_n^{m,\max}(X) &=& C_n^{\max} \prod_{i = 1}^n \Li(X^i) + O_{n, A}\left(\frac{X^{n(n + 1)/2}}{(\log X)^A}\right), \\
        N_{n, 2}^{m,\max}(X) &=& C_{n, 2}^{\max} \prod_{i = 1}^n \Li(X^i) + O_{n, A}\left(\frac{X^{n(n + 1)/2}}{(\log X)^A}\right),\\
        N_n^{\max}(X) &=& C_n^{\max} \prod_{i = 0}^n \Li(X) + O_{n, A}\left(\frac{X^{n + 1}}{(\log X)^A}\right), \\
        N_{n, 2}^{\max}(X) &=& C_{n, 2}^{\max} \prod_{i = 0}^n \Li(X) + O_{n, A}\left(\frac{X^{n + 1}}{(\log X)^A}\right),\\
    \end{eqnarray*}
    where
    \[ C_n^{\sqf} = \prod_p P_{n,p}^{\sqf}, \qquad
    C_{n, 2}^{\sqf} = \prod_{p > 2} P_{n,p}^{\sqf}, \qquad
    C_n^{\max} = \prod_p P_{n,p}^{\max}, \qquad
    C_{n, 2}^{\max} = \prod_{p > 2} P_{n,p}^{\max}, \]
    for some local densities $P_{n,p}^{\sqf},P_{n,p}^{\max}$  satisfying
    \begin{eqnarray*}
        P_{n,p}^{\sqf} &=& 1 - \frac{3p - 1}{p(p + 1)^2} + O\left(\min\left\{\left(\frac{p}{(p - 1)^2}\right)^{5/2}, \left(\frac{p}{(p - 1)^2}\right)^{\lfloor\sqrt{\log n/\log p}\rfloor/2}\right\}\right), \\
        P_{n,p}^{\max} &=& 1 - \frac{1}{p^2 + p + 1} + O\left(\min\left\{\frac{1}{(p - 1)^4}, \frac{1}{(p - 1)^{2\lfloor\sqrt{\log n/(4\log p)}\rfloor}}\right\}\right),
    \end{eqnarray*} 
    for $p>2$ and $n\geq 16$, with $$P_{n,2}^{\sqf} = \begin{cases}1&\mbox{ if }n\mbox{ is even}\\0&\mbox{ if }n\mbox{ is odd}\end{cases},\qquad P_{n,2}^{\max} = \begin{cases}1&\mbox{ if }n+1\mbox{ is odd}\\2p_t&\mbox{ if }n+1\equiv 2\pmod{4}\\ p_t&\mbox{ if }n+1\equiv 0\pmod{4}\end{cases},$$ where $t$ is the odd part of $n + 1$ and
    $$p_t = \prod_{d \mid t} \left(1 - \frac{1}{2^{o_d(2)}}\right)^{\phi(d)/o_d(2)},$$
    where $o_d(2)$ for any odd positive integer $d$ denotes the order of $2$ in $(\Z/d\Z)^\times$.
    Moreover, $C_{n, 2}^{\sqf}$ and $C_{n, 2}^{\max}$ are positive for every $n\geq 2$ and
    \begin{eqnarray*}
        \lim_{n \to \infty} C_{n, 2}^{\sqf} &=& \prod_{p > 2} \left(1 - \frac{3p - 1}{p(p + 1)^2}\right) \,\,\,\approx\,\,\, 67.69\%,\\
        \lim_{n \to \infty} C_{n, 2}^{\max} &=& \prod_{p > 2} \left(1 - \frac{1}{p^2 + p + 1}\right) \,\,\,\approx\,\,\, 85.26\%.
    \end{eqnarray*}
\end{theorem}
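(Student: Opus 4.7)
The plan is to run a M\"obius-inversion squarefree sieve, borrowing the tail estimates of \cite{BSW1,BSW2} for the large-modulus range and feeding in a prime-number-theorem-in-arithmetic-progressions input on the small-modulus range. I describe the argument for $N_n^{m,\sqf}(X)$; the other seven quantities are handled by the same template after routine modifications (dropping the $p=2$ factor for the ``odd part'' variants, and replacing $P_{n,p}^{\sqf}$ by $P_{n,p}^{\max}$ for the maximal-order variants, the latter being controlled by a congruence condition on $\ba \bmod p^2$ through Dedekind's criterion). Writing
\[
N_n^{m,\sqf}(X) \;=\; \sum_{f \in V_n^m(X)} \mu^2(\Delta(f)) \;=\; \sum_{f \in V_n^m(X)} \sum_{d\,:\,d^2 \mid \Delta(f)} \mu(d),
\]
I exchange orders of summation and truncate the $d$-sum at $M := (\log X)^B$, where $B$ is to be chosen large in terms of $A$.

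\textbf{Small moduli $d \leq M$.} For each squarefree $d \leq M$, the condition $d^2 \mid \Delta(f)$ cuts out a union of residue classes of $(a_1,\ldots,a_n) \bmod d^2$. For a fixed class $(b_1,\ldots,b_n)$ with $\gcd(b_i, d) = 1$ for each $i$, the count of $f \in V_n^m(X)$ in that class factors by coordinate-independence as $\prod_{i=1}^{n} \pi(X^i;\, d^2, b_i)$, and Siegel--Walfisz yields
\[
\pi(X^i;\, d^2, b) \;=\; \frac{\Li(X^i)}{\phi(d^2)} \;+\; O_{A'}\!\left(\frac{X^i}{(\log X)^{A'}}\right) \qquad (\gcd(b,d) = 1),
\]
uniformly in $d \leq (\log X)^B$. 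Summing over admissible residue classes and then over squarefree $d \leq M$ assembles the truncated Euler product $\prod_{p \leq M} P_{n,p}^{\sqf}$ (each local factor being exactly the density, under the $1/\phi(p^2)^n$ weighting forced by primality, of prime-coordinate tuples with $p^2 \nmid \Delta(f)$) times $\prod_i \Li(X^i)$, with the omitted tail of the Euler product contributing an acceptable $1 + O(1/M)$ factor.

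\textbf{Large moduli $d > M$.} I must bound
\[
T_M(X) \;:=\; \#\{f \in V_n^m(X)\,:\,d^2 \mid \Delta(f) \text{ for some squarefree } d > M\}.
\]
The tail estimates of \cite{BSW1,BSW2} provide a bound of shape $O_\epsilon(X^{n(n+1)/2}/M^{\delta})$ for the analogous quantity over \emph{all} integer tuples of the given heights, for some explicit $\delta > 0$. Since $V_n^m(X)$ is contained in the corresponding integer family, the same bound applies to $T_M(X)$, and choosing $B > A/\delta$ in $M = (\log X)^B$ yields $T_M(X) = O(X^{n(n+1)/2}/(\log X)^A)$. A slightly stronger bound can be obtained by inserting Brun--Titchmarsh coordinate-by-coordinate to save $(\log X)^n$, but this is not needed for the stated error.

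\textbf{Local densities at $p = 2$ and main obstacle.} The computation of $P_{n,p}^{\sqf}$ and $P_{n,p}^{\max}$ for $p > 2$ is a finite classification of residue classes of $\ba \bmod p^2$ making $p^2 \mid \Delta(f)$ (resp.\ obstructing $p$-maximality via Dedekind), weighted by $1/\phi(p^2)^n$. The case $p = 2$ is subtler: since all but finitely many primes are odd, $f \bmod 2$ is almost deterministic, and one reduces to analyzing the factorization of $x^n + x^{n-1} + \cdots + 1 \bmod 2$. For $n$ odd this polynomial acquires a repeated factor (coming from $x^{n+1}-1 = ((x^{(n+1)/2}-1)^2$ over $\F_2$ when $(n+1)/2$ is odd, and worse when $4 \mid n+1$), forcing $4 \mid \Delta(f)$ and hence $P_{n,2}^{\sqf} = 0$; the expression for $P_{n,2}^{\max}$ in terms of $p_t$ comes from enumerating the cyclotomic factorization $x^{n+1}-1 = \prod_{d \mid n+1} \Phi_d(x)$ over $\F_2$ and counting which patterns keep $\Z[x]/(f(x))$ $2$-maximal. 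The main obstacle is therefore not analytic but combinatorial: the careful bookkeeping of these local densities, together with verifying the convergence and positivity claims for the Euler products $C_{n,2}^{\sqf}$ and $C_{n,2}^{\max}$ at all $n \geq 2$, is the most intricate part of the proof.
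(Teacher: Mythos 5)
Your overall sieve skeleton (M\"obius inversion, split by modulus size, Siegel--Walfisz on the small range, tail estimates of \cite{BSW1,BSW2} on the large range) matches the paper's strategy, but there are two genuine gaps.

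\textbf{The two-range split fails.} The tail estimates you cite have the shape $O_\epsilon(X^{n(n+1)/2+\epsilon}/M^\alpha) + O_\epsilon(X^{n(n+1)/2-\beta+\epsilon})$ --- the $X^\epsilon$ factor is unavoidable. If you truncate at $M = (\log X)^B$, the first term is $X^{n(n+1)/2+\epsilon}/(\log X)^{B\alpha}$, which for any fixed $\epsilon>0$ is \emph{larger} than $X^{n(n+1)/2}$, so no choice of $B$ produces the claimed $O(X^{n(n+1)/2}/(\log X)^A)$. Brun--Titchmarsh cannot rescue this, as it only saves powers of $\log X$. What is actually needed is a \emph{three}-range split: small moduli $m \le (\log X)^\eta$ (Siegel--Walfisz), medium moduli $(\log X)^\eta < m \le X^\delta$ (a direct bound $N'_{\cB}(m,X) \ll \rho'(m^N)\prod_i X^{d_i}/m^N \ll m^{-c+\epsilon} X^d$, which gives a power-of-$\log$ saving after summing), and only then large moduli $m > X^\delta$ where the tail estimate's $X^\epsilon$ can be absorbed by the polynomial saving $X^{-\delta\alpha}$.

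\textbf{The local densities are not a ``finite classification of residue classes mod $p^2$.''} This is where the bulk of the paper's work lies and the proposal misidentifies the difficulty. The density $P_{n,p}^{\sqf}$ involves, after inclusion--exclusion over squarefree $u \in \F_p[x]$, the counts $\#\{h\in U_n(\F_p): u^2 \mid h\}$ where $U_n(\F_p)$ is the set of monic degree-$n$ polynomials with all coefficients nonzero and $\deg u$ ranges up to $n/2$ --- this is not a condition mod $p^2$ on the coefficients, and for fixed $p$ the combinatorics grow unboundedly with $n$. The real issue is equidistribution: one must show that polynomials with nonzero coefficients are roughly equidistributed in the residue classes mod $u$, i.e. bound the discrepancy $\delta_{n,p}(d)$, and the paper does this via a random-walk argument on a directed graph with vertex set $\F_p[x]/(u)$, using doubly stochastic matrices (Theorems \ref{thm3:matrix-p-large-bound} and \ref{thm3:matrix-p-small-bound}). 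Without this, neither the explicit main term $1 - \tfrac{3p-1}{p(p+1)^2}$ nor the error term $O\bigl((p/(p-1)^2)^{5/2}\bigr)$ can be established, and the $n\to\infty$ limits of $C_{n,2}^{\sqf}$ and $C_{n,2}^{\max}$ remain unproved. Your description of the $p=2$ analysis is roughly on target, but it is the equidistribution estimate for odd $p$ --- which you omit entirely --- that is the core technical contribution.
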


It is worth noting that the corresponding limits of the local densities for squarefree and maximality except possibly at $2$ without the prime conditions obtained by Yamamura \cite[Proposition~3]{Yama} and Lenstra \cite[Proposition~3.5]{ABZ} are 
$$\prod_{p>2}\left(1 - \frac{3p - 1}{p^2(p + 1)}\right) \quad\mbox{and}\quad \prod_{p > 2} \left(1 - \frac{1}{p^2}\right)$$
respectively.

As a consequence of the uniformity estimate proved in \cite{SW}, we have the following result.
\begin{theorem}\label{thm:a4b3}
    The number of pairs $(a, b)$ of primes such that $a < X^3$, $b < X^4$, and $a^4 + b^3$ is squarefree is
\[ C \Li(X^3)\Li(X^4) + O_A\left(\frac{X^7}{(\log X)^A}\right), \]
    for any $A>0$, where \[ C = \prod_p \left(1 - \frac{1}{p^2 - p}\right) \approx 37.40\%. \]
\end{theorem}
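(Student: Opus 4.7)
The plan is to imitate the inclusion--exclusion strategy behind the squarefree-value sieve of \cite{SW}, but with prime variables, replacing crude counting in arithmetic progressions by the Siegel--Walfisz theorem. Let $N(X)$ denote the count in the theorem. Since $a^4+b^3>0$ for positive primes $a,b$, the identity $\mu^2(n) = \sum_{d^2\mid n}\mu(d)$ gives
$$
N(X) \;=\; \sum_{d\geq 1}\mu(d)\,N_d(X),\qquad N_d(X) \;=\; \#\{(a,b) \text{ prime},\, a<X^3,\, b<X^4,\; d^2\mid a^4+b^3\}.
$$
I split this sum at $M := (\log X)^B$, where $B=B(A)$ is a large parameter. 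The contribution from $d>M$ is handled by the tail estimate from \cite{SW}; the contribution from $d\leq M$ is handled by Siegel--Walfisz.

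For $d>M$, restricting to primes only shrinks the count, so the tail is bounded above by the number of \emph{integer} pairs $(a,b)$ with $|a|<X^3$, $|b|<X^4$ and $d^2\mid a^4+b^3$ for some squarefree $d>M$. By the uniformity estimate in \cite{SW}, this is $O(X^7/M^\delta)$ for some absolute $\delta>0$, which, choosing $B\geq A/\delta$, is absorbed into the target error. For $d\leq M$ squarefree, I decompose $N_d(X)$ by residues modulo $d^2$: pairs $(u,v)$ with $d^2\mid u^4+v^3$ and $\gcd(uv,d)>1$ force $a$ or $b$ to coincide with a prime divisor of $d$, contributing $O(\omega(d)(\pi(X^3)+\pi(X^4)))$ per $d$ and $O_B(M^2 X^4/\log X)$ in aggregate, which is negligible. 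For $\gcd(uv,d)=1$, Siegel--Walfisz yields
$$
N_d(X) \;=\; \frac{\rho_d\,\Li(X^3)\Li(X^4)}{\phi(d^2)^2} + O_B\!\bigl(d^4\,X^7\exp(-c\sqrt{\log X})\bigr),
$$
where $\rho_d:=\#\{(u,v)\in(\Z/d^2\Z)^\times\times(\Z/d^2\Z)^\times : d^2\mid u^4+v^3\}$; summed over $d\leq M$, the Siegel--Walfisz errors are absorbed since $\exp(-c\sqrt{\log X})$ beats any power of $\log X$.

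By the Chinese remainder theorem, $\rho_d/\phi(d^2)^2 = \prod_{p\mid d}\rho_p/\phi(p^2)^2$. To compute $\rho_p$, fix a generator $g$ of the cyclic group $(\Z/p^2\Z)^\times$ and write $a=g^i$, $b=g^j$; using $-1=g^{p(p-1)/2}$, the congruence $a^4+b^3\equiv 0\pmod{p^2}$ becomes the linear congruence $4i-3j\equiv p(p-1)/2\pmod{p(p-1)}$. A case analysis on whether $\gcd(3,p(p-1))\in\{1,3\}$ shows that this has exactly $p(p-1)$ solution pairs $(i,j)$ in every case, so $\rho_p = p(p-1)$ uniformly in $p$, hence
$$
\frac{\rho_p}{\phi(p^2)^2} \;=\; \frac{1}{p(p-1)} \;=\; \frac{1}{p^2-p}.
$$
The main term becomes $\Li(X^3)\Li(X^4)\cdot\sum_{d\leq M,\,\mathrm{sqf}}\mu(d)\prod_{p\mid d}(p^2-p)^{-1}$, which differs from the absolutely convergent Euler product $C=\prod_p(1-1/(p^2-p))$ by $O(1/M)$; this contributes $O(X^7/((\log X)^2 M))$, comfortably within the target.

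The main obstacle is external: the uniformity estimate of \cite{SW} is what controls the middle range $M<d$ inaccessible to Siegel--Walfisz, and everything else rests on it. Granted that input, the remaining work is routine bookkeeping, the only mild subtlety being the verification that residue classes with $\gcd(uv,d)>1$ contribute negligibly, which follows from the observation that a prime divisible by $p$ must equal $p$.
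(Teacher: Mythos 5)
Your overall strategy is the same as the paper's: the paper simply applies Theorem~\ref{thm:primesieve} (which does the sieve once and for all) with $n=2$, $N=2$, $(d_1,d_2)=(3,4)$, $\cB_p=\{(a,b)\in\Z_p^2:p^2\mid a^4+b^3\}$, and then computes the local densities, which is the same cyclic-group computation $\rho'(p^2)=p(p-1)$ that you carry out (and your computation is correct, including the check that $\gcd(4,3)=1$ makes the linear congruence have exactly $p(p-1)$ solutions, with no real case analysis needed, and that $p=2$ also gives $\rho'(4)=2$).

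However, there is a genuine gap in your handling of the tail. You split once at $M=(\log X)^B$ and claim the tail from \cite{SW} has the form $O(X^7/M^\delta)$. That is not the form of the uniformity estimate this paper uses (see \eqref{eq:unif}): it reads $O_\epsilon(X^{d+\epsilon}/M^\alpha)+O_\epsilon(X^{d-\beta+\epsilon})$, and the factor $X^\epsilon$ swamps every power of $\log X$, so with $M=(\log X)^B$ the bound $X^{7+\epsilon}/(\log X)^{B\alpha}$ is \emph{not} $\ll X^7/(\log X)^A$ for any choice of $B$. This is precisely why Theorem~\ref{thm:primesieve} uses a three-range split: a small range $m\leq(\log X)^\eta$ for Siegel--Walfisz, a medium range $(\log X)^\eta<m\leq X^\delta$ handled by the trivial bound $N'_{\cB}(m,X)\ll\rho'(m^N)\prod_i X^{d_i}/m^N\ll_\epsilon m^{-c+\epsilon}X^d$, and a large range $m>X^\delta$ where the uniformity estimate genuinely gives a power saving. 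To make your argument correct, insert the medium-range bound $N_d(X)\ll\rho(d^2)X^7/d^4\ll_\epsilon X^7/d^{2-\epsilon}$ for $(\log X)^B<d<X^\delta$ (this is where you need $\lambda_p(\cB_p)\ll p^{-c}$ with $c>1$, which here follows from $\rho(p^2)\ll p^2$, i.e.\ $c=2$), and only invoke the uniformity estimate of \cite{SW} for $d\geq X^\delta$.
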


Theorem \ref{thm:polydisc} and Theorem \ref{thm:a4b3} follow from:
\begin{theorem}\label{thm:primesieve}
Let  $n$ and $N > 1$ be positive integers. Let $d_1, d_2, \ldots, d_n$ be fixed positive real numbers and let $d = d_1 + \cdots + d_n$.
Define a height function on $\Z^n$ by the formula
\[ H(\mathbf{a}) = \max_{1 \leq i \leq n} |a_i|^{1/d_i}. \]
Let $c > 1$ be a real number. For each prime $p$, let $\cB_p$ be a subset of $\Z_p^n$ with measure $\lambda_p(\cB_p) \ll p^{-c}$ such that $\cB_p$ is defined by congruence conditions mod $p^N$, where $\lambda_p$ denotes the $p$-adic Haar measure normalized so that $\lambda_p(\Z_p^n) = 1$. For any squarefree integer $m$, let $\cB_m = \bigcap_{p\mid m}(\cB_p\cap\Z^n)$ and define
\[ \rho'(m^N) = \#\{\mathbf{v}=(v_1,\ldots,v_n) \bmod{m^N} : \gcd(v_1, m) = \ldots = \gcd(v_n, m) = 1, \mathbf{v} \in \cB_m\}. \] Suppose $\{\cB_p\}_p$ satisfy the following Uniformity Estimate: there exist real numbers $\alpha,\beta>0$ such that for any real number $M>0$,
\begin{equation}\label{eq:unif}
    \sum_{\substack{m > M\\ m\text{ squarefree}}}\#\{\ba\in\Z^n\colon H(\ba) < X, \ba\in\cB_m\} = O_\epsilon\left(\frac{X^{d+\epsilon}}{M^\alpha}\right) + O_\epsilon(X^{d - \beta + \epsilon}).
\end{equation}
Let
\[ N'_{\cB}(X) = \#\{\mathbf{q}=(q_1,\ldots,q_n) \in \Z^n : q_1, \ldots, q_n \text{ prime, } H(\mathbf{q}) < X, \mathbf{q} \notin \cB_p \text{ for all $p$ prime}\}. \]
Then for any $A>0$, we have
$$N'_{\cB}(X) = C'_{\cB} \prod_{i = 1}^n \Li(X^{d_i}) + O_{A}\left(\frac{X^{d}}{(\log X)^A}\right),$$
where
 \[ C'_{\cB} = \prod_p\left(1 - \frac{\rho'(p^N)}{\phi(p^N)^n}\right). \]
Furthermore, $C'_{\cB} > 0$ provided that $\rho'(p^N) < \phi(p^N)^n$ for each $p$ prime; that is, there exists $\ba\in\Z^n\backslash \cB_p$ having all coordinates coprime to $p$.
\end{theorem}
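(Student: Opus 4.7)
The argument will proceed by Möbius inversion over squarefree integers and a split of the resulting sum at a polylogarithmic cutoff. Expanding
\[ \prod_{p} \bigl(1 - \mathbf{1}_{\cB_p}(\mathbf{q})\bigr) = \sum_{m\text{ squarefree}} \mu(m)\,\mathbf{1}_{\cB_m}(\mathbf{q}) \]
and interchanging summation (justified by the Uniformity Estimate applied at $M=1$), one obtains
\[ N'_\cB(X) = \sum_{m\text{ squarefree}} \mu(m)\, S_m(X), \qquad S_m(X) = \sum_{\substack{\mathbf{a}\bmod m^N\\ \mathbf{a}\in \cB_m,\ (a_i,m)=1}} \prod_{i=1}^n \pi(X^{d_i}; m^N, a_i). \]
I fix a cutoff $M_0 = (\log X)^C$, with $C$ to be chosen large in terms of the target exponent $A$, and estimate the ranges $m\le M_0$ and $m>M_0$ separately.

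\textbf{Small moduli.} Since $m^N \le (\log X)^{CN}$ for $m\le M_0$, the Siegel--Walfisz theorem gives $\pi(X^{d_i}; m^N, a_i) = \Li(X^{d_i})/\phi(m^N) + O_{A_0}(X^{d_i}/(\log X)^{A_0})$ for any $A_0>0$. Expanding the product of $\pi$'s and summing over the $\rho'(m^N)\le \phi(m^N)^n$ permissible residue tuples, the remainder in $S_m(X)$ is $O_{n,A_0}(\phi(m^N)^n X^d/(\log X)^{A_0})$, and summing over $m\le M_0$ the aggregate remainder is $O_{n,A_0}(X^d/(\log X)^{A_0 - C(Nn+1)})$, arbitrarily small in powers of $\log X$ for $A_0$ large. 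The main term assembles into $\prod_{i=1}^n \Li(X^{d_i})\cdot \sum_{m\le M_0}\mu(m)\rho'(m^N)/\phi(m^N)^n$; by CRT, $\rho'$ is multiplicative in $m$, and the hypothesis $\lambda_p(\cB_p)\ll p^{-c}$ forces $\rho'(p^N)/\phi(p^N)^n \ll p^{-c}$, so the full series converges absolutely to $C'_\cB = \prod_p\bigl(1-\rho'(p^N)/\phi(p^N)^n\bigr)$ with tail $O(M_0^{1-c})$, contributing an error $O(X^d/(\log X)^{n+C(c-1)})$ after multiplication by $\prod_i \Li(X^{d_i})$.

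\textbf{Large moduli.} For $m>M_0$ I split further at $m=X^\delta$ with $\delta>0$ small enough that Brun--Titchmarsh applies to each factor. In the intermediate range $M_0<m\le X^\delta$, Brun--Titchmarsh yields $\pi(X^{d_i};m^N,a_i) \ll X^{d_i}/(\phi(m^N)\log X)$, whence
\[ S_m(X) \ll \frac{\rho'(m^N)}{\phi(m^N)^n}\cdot\frac{X^d}{(\log X)^n} \ll \frac{X^d}{m^c(\log X)^n}, \]
and summing gives $\sum_{M_0<m\le X^\delta}S_m(X) \ll X^d/((\log X)^n M_0^{c-1}) = X^d/(\log X)^{n+C(c-1)}$. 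For $m>X^\delta$, the trivial inequality $S_m(X)\le \#\{\ba\in\Z^n: H(\ba)<X,\,\ba\in \cB_m\}$ and the Uniformity Estimate produce $\sum_{m>X^\delta}S_m(X) = O_\epsilon(X^{d+\epsilon-\delta\alpha}) + O_\epsilon(X^{d-\beta+\epsilon}) = O(X^{d-\eta})$ for $\epsilon$ sufficiently small, which is negligible. Choosing $C$ and $A_0$ large in terms of $A$ makes every contribution $O_A(X^d/(\log X)^A)$.

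\textbf{Positivity and main obstacle.} Positivity of $C'_\cB$ follows from each factor $1-\rho'(p^N)/\phi(p^N)^n$ being strictly positive (the stated hypothesis) together with absolute convergence of the Euler product (guaranteed by $\sum_p p^{-c}<\infty$). The central technical difficulty is the intrinsic $X^\epsilon$ loss in the Uniformity Estimate, which prevents its direct application at the polylogarithmic cutoff where Siegel--Walfisz is available; the role of the Brun--Titchmarsh intermediate range is precisely to bridge this gap, providing an essentially free $(\log X)^n$ savings over the trivial integer-tuple bound which, combined with the density decay $m^{-c}$ from $c>1$, suffices to handle all moduli between the polylog Siegel--Walfisz regime and the polynomial range in which the Uniformity Estimate is effective.
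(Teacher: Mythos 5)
Your decomposition into small ($m\le(\log X)^C$), intermediate ($(\log X)^C<m\le X^\delta$), and large ($m>X^\delta$) moduli, with Siegel--Walfisz on the first range, a uniform crude bound on the second, and the Uniformity Estimate on the third, is exactly the structure of the paper's proof. The one material difference is that in the intermediate range you invoke Brun--Titchmarsh to get $\pi(X^{d_i};m^N,a_i)\ll X^{d_i}/(\phi(m^N)\log X)$, whereas the paper simply drops the primality condition and uses the lattice-point bound $N'_{\cB}(m,X)\ll\rho'(m^N)\prod_i X^{d_i}/m^N\ll_\epsilon m^{-c+\epsilon}X^d$. Both work, but your closing paragraph overstates the role of Brun--Titchmarsh: the $(\log X)^{-n}$ saving it provides is not what makes the intermediate range manageable. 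The essential ingredient is the density decay $m^{-c}$ with $c>1$, since summing $m^{-c+\epsilon}$ over $m>(\log X)^C$ already produces $(\log X)^{-C(c-1)+\epsilon}$, which beats any fixed power of $\log X$ once $C$ is large. Brun--Titchmarsh is a harmless convenience here, not the bridge over the $X^\epsilon$ loss in the Uniformity Estimate; that bridge is the polynomial range $m\le X^\delta$ itself together with $c>1$. One more small point: your Brun--Titchmarsh application implicitly requires $m^N<X^{d_i - \varepsilon}$ for all $i$ (so the modulus is a fixed power below the range), which forces $\delta<d_{\min}/N$, exactly the constraint the paper imposes for the same intermediate range for a similar reason.
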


When the bad set $\cB_p$ is contained in a set of the form $\{\ba\in\Z_p^n\colon p^2\mid F(\ba)\}$
for some fixed polynomial $F(x_1,\ldots,x_n)\in\Z[x_1,\ldots,x_n]$, we have $\lambda_p(\cB_p)\ll p^{-2}$ by \cite[Lemma 2.1]{LX1}.

We prove Theorem \ref{thm:primesieve} following the technique of Browning \cite{BR} using the inclusion-exclusion sieve and then splitting the sum into the small range $m < (\log X)^{\eta}$ for some large $\eta$, the medium range $(\log X)^{\eta} < m < X^\delta$ for some small $\delta$, and the large range $m > X^\delta$ where we use our uniformity estimate. The local densities in Theorem \ref{thm:a4b3} are easily computed. 

The bulk of the paper is devoted to the calculation of the local densities in Theorem \ref{thm:polydisc}. Let $V_n(\Z_p)$ denote the set of monic degree $n$ polynomials in $\Z_p[x]$ and let $U_n(\Z_p)$ denote the subset of $V_n(\Z_p)$ where all the coefficients are not divisible by $p$. Then $P^{\sqf}_{n,p}$ and $P^{\max}_{n,p}$ are the squarefree density and maximality density of $f(x)$ in $U_n(\Z_p)$. Since a non-monic polynomial with leading coefficient not divisible by $p$ behaves just like a monic polynomial over $\Z_p$, the corresponding densities in the space of non-monic polynomials are the same as in the space of monic polynomials. When using the inclusion-exclusion sieve, an important quantity that we need to estimate is
$$\#\{h\in U_n(\F_p)\colon u\mid h\}$$
for some fixed monic polynomial $u\in\F_p[x]$, where $U_n(\F_p)$ is the set of monic degree $n$ polynomial in $\F_p[x]$ with all coefficients nonzero. We expect the residual classes of polynomials in $U_n(\F_p)$ modulo $u$ to be equidistributed. We approximate $\#\{h\in U_n(\F_p)\colon u\mid h\}$ by $p^{-\deg(u)}\#U_n(\F_p)$ to obtain the main terms in $P^{\sqf}_{n,p}$ and $P^{\max}_{n,p}$. We note that when $x\mid u$, the set $\{h\in U_n(\F_p)\colon u\mid h\}$ is empty. This is the main cause of the difference in densities in the space of all integer polynomials compared to the densities in the space of polynomials with prime coefficients, in the $n$-limit. We remark that the same method can be used to determine the squarefree and maximality densities in the space of all integer polynomials with \emph{prime constant coefficients} and they will have the same limits as $P^{\sqf}_{n,p}$ and $P^{\max}_{n,p}$ as $n$ goes to infinity (see \cite{ST}).

We prove upper bounds on the discrepancy
$$\left|\frac{\#\{h\in U_n(\F_p)\colon u\mid h\}}{\#U_n(\F_p)} - \frac{1}{p^{\deg(u)}}\right|$$
by modeling a degree $n$ polynomial in $U_n(\F_p)$ mod $u$ as a path of length $n$ on a certain directed graph whose vertices are indexed by $\F_p[x]/(u)$, and where every vertex has in-degree and out-degree $p - 1$. We use the theory of double stochastic matrices to estimate the sizes of the entries of powers of the adjacency matrix.

This paper is organized as follows. In Section 2, we perform the sieve to prime inputs and prove Theorem \ref{thm:primesieve}. In Section 3, we compute the densities $P^{\sqf}_{n,p}$ and $P^{\max}_{n,p}$ in terms of the quantities $\#\{h\in U_n(\F_p)\colon u\mid h\}$ and show how the main terms appear. In Section 4, we bound the discrepancy and complete the proof of Theorem \ref{thm:polydisc}.

\section{Proof of Theorem~\ref{thm:primesieve}}

In this section, we prove Theorem~\ref{thm:primesieve}, following the method of Browning in~\cite{BR}. By the inclusion-exclusion sieve:
\begin{equation}\label{eq:incexc}
    N'_{\cB}(X) = \sum_m \mu(m) N'_{\cB}(m, X) 
\end{equation}
where $$N'_{\cB}(m, X) = \#\{\mathbf{q}=(q_1,\ldots,q_n) \in \Z^n : q_1, \ldots, q_n \text{ prime, } H(\mathbf{q}) < X, \ba\in\cB_m\}.$$
Let $\eta,\delta>0$ be real numbers to be chosen later. We split the sum in \eqref{eq:incexc} into
\begin{equation}
    \label{eq:split}
    N'_{\cB}(X) = \sum_{m \leq (\log X)^{\eta}} \mu(m) N'_{\cB}(m, X) + \sum_{(\log X)^{\eta} < m \leq X^{\delta}} \mu(m) N'_{\cB}(m, X) + \sum_{m > X^{\delta}} \mu(m) N'_{\cB}(m, X). 
\end{equation}
The assumption $\lambda_p(\cB_p) \ll p^{-c}$ then yields $\rho'(p^N) \ll p^{Nn - c}$ for each prime $p$.
For each squarefree $m$, we then have $\rho'(m^N) \ll_{\epsilon} m^{Nn - c + \epsilon}$. Moreover, we have the general estimate
$$N'_{\cB}(m, X)
    = \sum_{\substack{\mathbf{v} \bmod{m^N} \\ \forall i, \gcd(v_i, m) = 1 \\ \mathbf{v} \in \cB_m}}
        \#\{\mathbf{q} \in \Z^n : q_1, \ldots, q_n \text{ prime, } H(\mathbf{q}) < X, \mathbf{q} \equiv \mathbf{v} \bmod{m^N}\} + O_{\epsilon}\left(\sum_{i = 1}^n m^{\epsilon} X^{d - d_i}\right),$$
where the error term comes from when at least one of the $q_i$ is one of the $m^\epsilon$ prime divisors of $m$.

We now use the Siegel-Walfisz theorem to count primes mod $m^N$ where $m < (\log X)^\eta$. Let $d_{\min} = \min_i d_i$. There exists a constant $C_1>0$ depending on $N\eta$ (and $d_{\min}$) such that for any squarefree $m \leq (\log X)^\eta$,
\begin{eqnarray*}
    N'_{\cB}(m, X) &=& \sum_{\substack{\mathbf{v} \bmod{m^N} \\ \forall i, \gcd(v_i, m) = 1 \\ \mathbf{v} \in \cB_m}}\prod_{i=1}^n \left(\frac{\Li(X^{d_i})}{\phi(m^N)} + O\Big(X^{d_i}\exp(-C_1\sqrt{\log X})\Big)\right) + O_{\epsilon}\left(\sum_{i = 1}^n m^{\epsilon} X^{d - d_i}\right)\\
    &=& \frac{\rho'(m^N)}{\phi(m^N)^n}\prod_{i=1}^n\Li(X^{d_i}) + O\left(\rho'(m^N)X^d\exp(-C_1\sqrt{\log X})\right) + O_\epsilon(X^{d - d_{\min}+\epsilon})\\
    &=& \frac{\rho'(m^N)}{\phi(m^N)^n}\prod_{i=1}^n\Li(X^{d_i}) + O\left(X^d\exp(-C_2\sqrt{\log X})\right),
\end{eqnarray*}
where we may take $C_2 = C_1/2$ since $\rho'(m^N) \ll (\log X)^{Nn\eta}$. Summing over $m<(\log X)^\eta$ gives
$$\sum_{m \leq (\log X)^{\eta}} \mu(m) N'_{\cB}(m, X) = \left(\sum_{m \leq (\log X)^{\eta}} \mu(m) \frac{\rho'(m^N)}{\phi(m^N)^n}\right)\prod_{i=1}^n\Li(X^{d_i}) + O\left(X^d\exp(-C_3\sqrt{\log X})\right)$$
where $C_3 = C_2/2$. Since $\rho'$ is multiplicative, we have
\begin{eqnarray*}
    \prod_p\left(1 - \frac{\rho'(p^N)}{\phi(p^N)^n}\right) &=& \sum_{m \leq (\log X)^{\eta}} \mu(m) \frac{\rho'(m^N)}{\phi(m^N)^n} + O\left(\sum_{m > (\log X)^{\eta}}\frac{\rho'(m^N)}{\phi(m^N)^n}\right)\\
    &=& \sum_{m \leq (\log X)^{\eta}} \mu(m) \frac{\rho'(m^N)}{\phi(m^N)^n} + O_\epsilon\left(\sum_{m > (\log X)^{\eta}}\frac{m^{Nn - c + \epsilon}}{m^{Nn-\epsilon}}\right)\\
    &=& \sum_{m \leq (\log X)^{\eta}} \mu(m) \frac{\rho'(m^N)}{\phi(m^N)^n} + O_\epsilon\left(\frac{1}{(\log X)^{\eta(c - 1)-\epsilon}}\right).
\end{eqnarray*}
Hence we have 
\begin{equation}\label{eq:small}
    \sum_{m \leq (\log X)^{\eta}} \mu(m) N'_{\cB}(m, X) = \prod_p\left(1 - \frac{\rho'(p^N)}{\phi(p^N)^n}\right)\prod_{i=1}^n\Li(X^{d_i}) + O\left(\frac{X^d}{(\log X)^{n + \eta(c-1) - \epsilon}}\right).
\end{equation}

For the medium range $(\log X)^\eta \leq m < X^\delta$, we ignore the prime condition. Moreover, we may assume $\delta < d_{\min} / N$ so that
$$N'_{\cB}(m, X) \ll \rho'(m^N) \prod_{i = 1}^n \frac{X^{d_i}}{m^N} \ll_{\epsilon} m^{-c + \epsilon} X^{d}.$$
Summing gives
\begin{equation}\label{eq:mid}
    \sum_{(\log X)^{\eta} \leq m < X^\delta} \mu(m) N'_{\cB}(m, X) \ll_\epsilon \frac{X^d}{(\log X)^{\eta(c - 1)-\epsilon}}
\end{equation}

Finally for the large range $m \geq X^\delta$, we use the uniformity estimate assumption to obtain
\begin{equation}\label{eq:large}
    \sum_{m \geq X^\delta} \mu(m) N'_{\cB}(m, X) \ll_\epsilon X^{d-\delta\alpha+\epsilon}+ X^{d -\beta+\epsilon}
\end{equation}
Therefore, given any $A > 0$, by taking $\eta = (2A+1)/(c - 1)$ and $\delta = d_{\min}/(2N)$, we have
$$N'_{\cB}(X) = \prod_p\left(1 - \frac{\rho'(p^N)}{\phi(p^N)^n}\right)\prod_{i=1}^n\Li(X^{d_i}) + O_A\left(\frac{X^d}{(\log X)^A}\right).$$

If $\rho'(p^N) < \phi(p^N)^n$ for all primes $p$, then the local density $1 - \frac{\rho'(p^N)}{\phi(p^N)^n}$ is positive and is $\gg 1 - p^{-c}$ by the assumption $\lambda_p(\cB_p)\ll p^{-c}$. Hence their product over all $p$ converges to a positive number.
This concludes the proof of Theorem \ref{thm:primesieve}.
\section{Density calculation}

Let $V_n(\Z_p)$ denote the set of monic degree $n$ polynomials in $\Z_p[x]$ and let $U_n(\Z_p)$ denote the subset of $V_n(\Z_p)$ where all the coefficients are not divisible by $p$. In this section, we work towards computing the squarefree density $P^{\sqf}_{n,p}$ and maximality density $P^{\max}_{n,p}$ of $f(x)$ in $U_n(\Z_p)$.

Let $\F_p[x]_m$ denote the set of monic polynomials over $\F_p$. Let $U_n(\F_p)$ be the set of monic degree $n$ polynomials in $\F_p[x]$ with all coefficients nonzero. For a fixed $u\in \F_p[x]_m$, and for every $\alpha\in\F_p[x]/(u)$, we define the set
$$\cA_n(u;\alpha) = \{(a_1, a_2, \ldots, a_n) \in (\F_p \setminus \{0\})^n : a_1 x^{n - 1} + \ldots + a_n \equiv \alpha \bmod{u}\}.$$
We expect the $(p-1)^n$ polynomials $a_1x^{n-1} + \cdots + a_n\in\F_p[x]$ with nonzero coefficients to be somewhat equidistributed in the $p^{\deg(u)}$ residue classes mod $u$. In other words, we expect the following quantity
\begin{equation}\label{eq:delta}
    \delta_{n,p}(d) = \max_{\substack{u \in \F_p[x]_m \\ \deg(u) = d \\ x \nmid u}} \max_{\alpha \in \F_p[x]/(u)} \left|\frac{|\cA_{n}(u; \alpha)|}{(p - 1)^n} - \frac{1}{p^d}\right|
\end{equation} 
to be small (see Theorem \ref{thm3:matrix-p-large-bound} and Theorem \ref{thm3:matrix-p-small-bound}). Notice from the definition that $\delta_{n,p}(0) = 0$.  The goal of this section is to prove the following results expressing $P^{\sqf}_{n,p}$ and $P^{\max}_{n,p}$ in terms of $\delta_{n,p}(d)$.

\begin{theorem}\label{thm:Psqf}
For any positive integer $n \geq 2$,
\[ P_{n, 2}^{\sqf} = \begin{cases} 1, & \text{if } 2 \mid n, \\ 0, & \text{if } 2 \nmid n. \end{cases} \]
For any odd prime $p$ and $n \geq 2$, we have
\[ P_{n,p}^{\sqf} = 1 - \frac{3p - 1}{p(p + 1)^2} + O\left(\sum_{d = 0}^n p^{d/2} \delta_{n,p}(d) + \frac{1}{p^{n/2}}\right), \]
    where the implied constant is absolute.
\end{theorem}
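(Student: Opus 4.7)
The plan is to characterize $p^2 \nmid \Delta(f)$ purely in terms of the factorization of $\bar f := f \bmod p$ together with a single lifting condition, and then assemble $P_{n,p}^{\sqf}$ as a Möbius sum whose building blocks are ratios $|\cA_n(u;-x^n \bmod u)|/(p-1)^n$ to which the bound $\delta_{n,p}$ directly applies. For $p = 2$, every $f \in U_n(\Z_2)$ satisfies $\bar f = 1 + x + \cdots + x^n$; factoring $(x-1)\bar f = x^{n+1} - 1$ over $\F_2$ shows $\bar f$ is squarefree iff $n$ is even, and a short discriminant computation (the expansion $B^2 - 4AC$ carries an unavoidable factor of $4$ at $p = 2$) yields $v_2(\Delta(f)) \geq 2$ whenever $\bar f$ has any repeated factor. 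Hence $P_{n,2}^{\sqf} = 1$ if $n$ is even and $0$ otherwise.

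For $p > 2$, I would show that $p^2 \nmid \Delta(f)$ iff either (A) $\bar f$ is squarefree, or (B) $\bar f = (x-a)^2 \bar h$ for some $a \in \F_p^*$ with $\bar h$ squarefree in $\F_p[x]$ coprime to $x-a$, and $r(a) \not\equiv 0 \pmod p$ in the expansion $f \equiv (x-a)^2 \tilde h + p r \pmod{p^2}$. The proof uses Hensel's lemma to factor $f = F \cdot H$ in $\Z_p[x]$ whenever $\bar f = g^2 \bar h$ with $g$ irreducible and $\gcd(g, \bar h) = 1$; since $\Delta(H)$ and $\mathrm{Res}(F, H)$ are $p$-adic units, one has $v_p(\Delta(f)) = v_p(\Delta(F))$. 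Taylor-expanding $F = g^2 + p r$ near each of the $\deg g$ roots of $g$ in the unramified extension $\cO$ gives a local quadratic whose discriminant has $v_p$ exactly $1$ iff $g \nmid r$, so $v_p(\Delta(F)) = \deg g$ generically; configurations with a triple-or-higher factor or two distinct repeated factors contribute $\geq 2$ to $v_p(\Delta(f))$ automatically. Hence only linear $g$ survives.

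With this characterization, decompose $P_{n,p}^{\sqf} = \Pr(A) + \sum_{a \in \F_p^*} \Pr(B_a)$, where each $\Pr(B_a)$ already includes the factor $(1 - 1/p)$ coming from $r(a) \bmod p$ being uniform on $\F_p$ conditionally on $\bar f$ (a short linear-algebra check since $r(x) = \tilde a_1 x^{n-1} + \cdots + \tilde a_n$ and $a \in \F_p^*$). Möbius inversion on the squarefree and coprime conditions yields
\begin{align*}
\Pr(A) &= \sum_{\substack{u \text{ squarefree monic} \\ x \nmid u}} \mu(u) \cdot \frac{|\cA_n(u^2; -x^n \bmod u^2)|}{(p-1)^n}, \\
\Pr(B_a) &= \Bigl(1 - \tfrac{1}{p}\Bigr) \sum_{\substack{w \text{ squarefree monic} \\ \gcd(w,\, x(x-a)) = 1}} \mu(w) \cdot \frac{|\cA_n((x-a)^2 w^2;\, \cdot\,)| - |\cA_n((x-a)^3 w^2;\, \cdot\,)|}{(p-1)^n},
\end{align*}
with the convention that $|\cA_n(u; \cdot)|$ vanishes whenever $x \mid u$. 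Substituting $|\cA_n(u;-x^n)|/(p-1)^n = p^{-\deg u} + O(\delta_{n,p}(\deg u))$ (valid for $x \nmid u$) collapses each main term into an Euler product via $\zeta_{\F_p[x]}(2) = (1 - 1/p)^{-1}$: $\Pr(A)_{\text{main}} = \prod_{g \neq x}(1 - p^{-2 \deg g}) = p/(p+1)$, and a parallel simplification yields $\Pr(B_a)_{\text{main}} = (p-1)/(p(p+1)^2)$. Summing, the total main term is $\frac{p}{p+1} + (p-1) \cdot \frac{p-1}{p(p+1)^2} = 1 - \frac{3p-1}{p(p+1)^2}$. The accumulated discrepancy errors are bounded by $\sum_d p^d \delta_{n,p}(2d) \ll \sum_{d=0}^n p^{d/2} \delta_{n,p}(d)$ (there being at most $p^d$ squarefree monics of degree $d$), and tails of the Euler products beyond $\deg u > n/2$ (where $\cA_n$ vanishes identically) contribute $O(p^{-n/2})$.

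The main technical obstacle is the local discriminant computation yielding $v_p(\Delta(F)) = \deg g$ under $g \nmid r$: once that is established, the remaining argument is routine Möbius bookkeeping and function-field Euler-product evaluation.
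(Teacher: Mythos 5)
Your proposal follows essentially the same route as the paper: the same decomposition of $P_{n,p}^{\sqf}$ into the "squarefree $\bar f$" term plus the $(x+c)^2$-pieces with a conditional $(1-1/p)$ lifting factor, the same Möbius/Euler-product evaluation of the main terms (the paper does this via the $L$-series $L_{\mathbf{1}_x\cdot\mu}$ and $L_{\mathbf{1}_x\cdot\mathbf{1}_{x+c}\cdot\mu}$ at $T=1/p^2$), the same use of $\delta_{n,p}$ to control the discrepancy, and the same $O(p^{-n/2})$ tail from the degree cutoff $2\deg u > n$. The only substantive difference is that you re-derive the local discriminant characterization from scratch via Hensel and a local quadratic-discriminant computation, where the paper simply cites Ash--Brakenhoff--Zarrabi Proposition~6.7; your intermediate claim that the discrepancy error is $\sum_d p^d\delta_{n,p}(2d)$ slightly underreports the odd-argument contributions $\delta_{n,p}(2d+3)$ and the factor $p-1$ from summing over $a$, but both are absorbed into your final (and the paper's) bound $\sum_{d=0}^n p^{d/2}\delta_{n,p}(d)$.
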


\begin{theorem}\label{thm:Pmax}
For any positive integer $n \geq 2$,
$$P_{n,2}^{\max} = \begin{cases}1&\mbox{ if }n+1\mbox{ is odd}\\2p_t&\mbox{ if }n+1\equiv 2\pmod{4}\\ p_t&\mbox{ if }n+1\equiv 0\pmod{4}\end{cases},$$ where $t$ is the odd part of $n + 1$ and
    $$p_t = \prod_{d \mid t} \left(1 - \frac{1}{2^{o_d(2)}}\right)^{\phi(d)/o_d(2)},$$
    where $o_d(2)$ for any odd positive integer $d$ denotes the order of $2$ in $(\Z/d\Z)^\times$.
For any odd prime $p$ and $n \geq 2$, we have
\[ P_{n,p}^{\max} = 1 - \frac{1}{p^2 + p + 1} + O\left(\sum_{d = 0}^{\lfloor n/2 \rfloor} \delta_{n,p}(2d) + \frac{1}{p^n}\right), \]
    where the implied constant is absolute.
\end{theorem}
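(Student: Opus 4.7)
The plan is to apply Dedekind's criterion at $p$ to characterize non-maximality as a union of events indexed by monic irreducible factors of $\bar f$, and then extract $P_{n,p}^{\max}$ via inclusion-exclusion with the error controlled by $\delta_{n,p}$. For each monic irreducible $\bar g \in \F_p[x]$, let $E_{\bar g}$ denote the event that Dedekind's criterion fails at $\bar g$: $\bar g^2 \mid \bar f$ and $\bar g \mid \bar M$, where $M := (f - g^e h)/p$ for any lifts $g, h \in \Z_p[x]$ of $\bar g$ and $\bar f/\bar g^e$ (with $e$ the exact multiplicity of $\bar g$ in $\bar f$). Then $f \in U_n(\Z_p)$ is $p$-maximal iff none of the $E_{\bar g}$ occurs; since $\bar f$ has nonzero constant term in $U_n(\F_p)$, $x \nmid \bar f$, so only $\bar g \ne x$ contribute.

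For odd $p$ and distinct monic irreducibles $\bar g_1, \ldots, \bar g_k \in \F_p[x] \setminus \{x\}$ of total degree $D = \sum_i d_i \le n/2$, I would compute the joint probability via
\[ P\Big(\bigcap_i E_{\bar g_i}\Big) = P\Big(\big(\textstyle\prod_i \bar g_i\big)^2 \mid \bar f\Big) \cdot P\Big(\forall i:\, \bar g_i \mid \bar M_i \,\Big|\, \big(\textstyle\prod_i \bar g_i\big)^2 \mid \bar f\Big). \]
The first factor equals $p^{-2D} + O(\delta_{n,p}(2D))$ by the definition of $\delta_{n,p}$ applied to $u = \prod_i \bar g_i^2$ (which is coprime to $x$). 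For the second, parametrize the lifts by $f \equiv \tilde f + pF \pmod{p^2}$ with $\tilde f \in (\Z/p^2)[x]$ a fixed lift of $\bar f$ and $F \in \F_p[x]_{<n}$ free; a short calculation then shows $\bar M_i \equiv F - A_i \pmod{\bar g_i}$ for constants $A_i$ depending only on the chosen lifts $g_i, h_i$. By the Chinese Remainder Theorem in $\F_p[x]$, $(F \bmod \bar g_i)_i$ is uniformly distributed over $\prod_i \F_p[x]/(\bar g_i)$, so the conditions $\bar g_i \mid \bar M_i$ are jointly independent, each of probability $p^{-d_i}$. Thus $P(\bigcap_i E_{\bar g_i}) = p^{-3D} + O(\delta_{n,p}(2D)/p^D)$.

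Inclusion-exclusion then yields $P_{n,p}^{\max} = \sum_S (-1)^{|S|} P(\bigcap_{\bar g \in S} E_{\bar g})$, summed over finite sets $S$ of distinct monic irreducibles $\bar g \ne x$. Truncating to $D_S \le \lfloor n/2\rfloor$ (beyond which $(\prod_{\bar g \in S}\bar g)^2 \nmid \bar f$ is automatic) and bounding $\#\{S : D_S = D\} \le p^D$ to aggregate the $\delta$-errors, I obtain
\[ P_{n,p}^{\max} = \prod_{\substack{\bar g \in \F_p[x] \text{ monic irred.}\\ \bar g \ne x}} \Big(1 - \frac{1}{p^{3\deg \bar g}}\Big) + O\Big(\sum_{d=0}^{\lfloor n/2\rfloor}\delta_{n,p}(2d) + \frac{1}{p^n}\Big), \]
with the $1/p^n$ absorbing the truncation tail at $D > n/2$. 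The main-term product simplifies via the Euler-product identity $\prod_{\bar g \text{ monic irred}}(1 - p^{-3\deg\bar g}) = 1/\zeta_{\F_p[x]}(3) = 1 - p^{-2}$, giving $(1-1/p^2)/(1-1/p^3) = p(p+1)/(p^2+p+1) = 1 - 1/(p^2+p+1)$, as claimed.

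For $p = 2$, $U_n(\F_2)$ is the singleton $\{\bar f_n\}$ with $\bar f_n = (x^{n+1}-1)/(x-1)$. Writing $n + 1 = 2^k t$ with $t$ odd, the cyclotomic factorization over $\F_2$ gives $\bar f_n = (x+1)^{2^k - 1}\prod_{d \mid t, d>1}\big(\prod_j \bar g_{d,j}\big)^{2^k}$, where each $\Phi_d \bmod 2$ splits into $\phi(d)/o_d(2)$ distinct irreducibles of degree $o_d(2)$. The same Dedekind-plus-CRT analysis (now exact, as $\bar f$ is determined) yields $P_{n,2}^{\max} = \prod_{\bar g \text{ repeated in } \bar f_n}(1 - 2^{-\deg\bar g})$, equal to $1$ when $k = 0$, to $2p_t$ when $k = 1$ (the $1 - 1/2$ factor is dropped since $(x+1)$ is not repeated), and to $p_t$ when $k \ge 2$. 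The main technical obstacle lies in the uniformity-independence claim of the second paragraph: verifying that, after conditioning on $\bar f \in U_n(\F_p)$ with $(\prod_i \bar g_i^2) \mid \bar f$, the auxiliary parameter $F$ is genuinely uniform over $\F_p[x]_{<n}$ so that $(F \bmod \bar g_i)_i$ becomes uniform via CRT; this cleanly separates the mod-$p$ discrepancy (controlled by $\delta_{n,p}$) from the exact mod-$p^2$ lift count, but requires checking that the $U_n(\Z_p)$ constraint is wholly absorbed into the choice of $\bar f$ and imposes no residual restriction on $F$.
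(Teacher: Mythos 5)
Your proposal is correct and follows essentially the same route as the paper: both apply Dedekind's criterion, perform inclusion--exclusion over squarefree monic $u$ with $x \nmid u$, control $P(u^2 \mid \bar f)$ via $\delta_{n,p}(2\deg u)$, and evaluate the main term $\sum_{x \nmid u} \mu(u)p^{-3\deg u} = 1 - 1/(p^2+p+1)$, with the $p=2$ case reduced to an exact cyclotomic factorization of the unique element of $U_n(\F_2)$. You re-derive by hand, via the CRT argument on the free mod-$p^2$ parameter $F$, what the paper obtains from Lemmas~\ref{lem4:maximality-criterion}, \ref{lem4:maximality-inclusion-exclusion-lemma}, and~\ref{lem4:maximality-ideal-measure}; your concluding worry about whether $F$ is genuinely uniform is unfounded, since the $U_n(\Z_p)$ condition constrains only the reduction $\bar f \bmod p$ and therefore imposes nothing on $F$.
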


\subsection{Proof of Theorem~\ref{thm:Psqf}}

We start by characterizing monic polynomials in $\Z_p$ with squarefree discriminant.

\begin{lemma}\label{lem4:sqfree-discr-description}
(\cite[Proposition 6.7]{ABZ}) Let $p$ be a prime and $f \in \Z_p[x]$ be a monic non-constant polynomial.
Denote by $\Delta(f)$ the discriminant of $f$, and by $\overline{f}$ the reduction of $f$ modulo $p$.
\begin{itemize}
    \item   $\Delta(f)$ is a unit if and only if $\overline{f} \in \F_p[x]$ is squarefree.
    \item   $\Delta(f)$ has valuation $1$ if and only if $p \neq 2$ and the following holds: there exists $c \in \F_p$ and $g \in \F_p[x]_m$ squarefree such that $x + c \nmid g$, $\overline{f} = (x + c)^2 g$, and $p^2 \nmid f(-\tilde{c})$ for some (any) integer lift $\tilde{c}$ of $c$.
\end{itemize}
\end{lemma}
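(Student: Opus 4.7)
My plan is to handle both bullets by reducing to the analysis of a single low-degree ``bad'' factor via Hensel's lemma, then performing an explicit computation on that factor. The first bullet is standard: the discriminant commutes with reduction modulo $p$, so $\Delta(f) \bmod p = \Delta(\bar f)$, and $\Delta(f)$ is a unit iff $\bar f$ is squarefree in $\F_p[x]$.

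For the second bullet, write $\bar f = h_1 h_2$ with $h_1$ collecting the non-squarefree part of $\bar f$ and $h_2$ the squarefree complement, coprime to $h_1$. Hensel's lemma lifts this coprime factorization to a monic factorization $f = F_1 F_2$ in $\Z_p[x]$ with $\bar{F_i} = h_i$. By the discriminant product formula
$$\Delta(f) = \Delta(F_1)\, \Delta(F_2)\, \Res(F_1, F_2)^2,$$
together with the fact that $\Delta(F_2)$ and $\Res(F_1, F_2)$ are units (by bullet one and the coprimality of $h_1, h_2$), the analysis reduces to computing $v_p(\Delta(F_1))$.

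A short case analysis on the shape of $h_1$ shows $v_p(\Delta(F_1)) \geq 2$ unless $h_1$ is the square of a single linear factor $(x+c)$: for two distinct repeated factors, one splits them by a further application of Hensel and sums their individual contributions; for a repeated factor $h^k$ with $(\deg h, k) \neq (1,2)$, a direct analysis of $\Delta(F_1) = \prod_{i<j}(\alpha_i - \alpha_j)^2$ (using that each pair of roots reducing to the same residue contributes valuation at least $1/e$ in a ramified extension of index $e$, and counting such pairs) yields $v_p \geq 2$. Hence $v_p(\Delta(f)) = 1$ forces $h_1 = (x+c)^2$ and $F_1$ a monic quadratic lift. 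Writing $F_1(x) = x^2 + Ax + B$ with $A = 2\tilde c + pa$ and $B = \tilde c^2 + pb$, for $\tilde c$ any integer lift of $c$, direct expansion gives
$$F_1(-\tilde c) = -p(a\tilde c - b), \qquad \Delta(F_1) = 4p(a\tilde c - b) + p^2 a^2.$$
For odd $p$, $v_p(\Delta(F_1)) = 1 \iff v_p(F_1(-\tilde c)) = 1$; for $p = 2$, the factor $4$ forces $v_2(\Delta(F_1)) \geq 2$, producing the exclusion of $p = 2$ in the statement. Since $F_2(-\tilde c) \equiv h_2(-c) \not\equiv 0 \pmod p$ is a unit, $v_p(f(-\tilde c)) = v_p(F_1(-\tilde c))$, yielding the claimed equivalence.

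The main obstacle is the case analysis on the shape of $h_1$, to rule out $v_p(\Delta(f)) = 1$ for all configurations outside $h_1 = (x+c)^2$ (multiple distinct repeated factors, or a single repeated factor with higher multiplicity or higher degree). Once reduced to that single-double-linear case, the quadratic computation and the $p = 2$ exclusion fall out immediately from the explicit formulas above.
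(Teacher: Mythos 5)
The paper does not prove this lemma; it is quoted verbatim from \cite[Proposition 6.7]{ABZ}, so there is no internal argument to compare against. Your overall architecture -- reduce to the non-squarefree factor $F_1$ via Hensel and the product formula $\Delta(f) = \Delta(F_1)\Delta(F_2)\Res(F_1,F_2)^2$, then compute the quadratic case explicitly -- is a sound way to prove it, and the quadratic computation (including the extraction of the $p = 2$ exclusion from the factor $4$) is correct.

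However, the step you yourself flag as the main obstacle is where the argument as written actually breaks. For a single repeated factor $h_1 = (x+c)^k$ with $k \geq 3$, you propose to bound $v_p(\Delta(F_1)) = \sum_{i<j} 2\,v_p(\alpha_i - \alpha_j)$ by noting that each pair of roots sharing a residue contributes $\geq 1/e$ to $v_p(\alpha_i - \alpha_j)$, where $e$ is the ramification index of the splitting field, and then multiplying by the number of pairs. This bound is \emph{too weak}: the splitting field of a degree-$k$ polynomial can have ramification index much larger than $k$. Concretely, for $p = 3$ and $F_1 = x^3 - 3$ (so $\bar F_1 = x^3$, $d = 1$, $k = 3$), the splitting field $\Q_3(3^{1/3},\zeta_3)$ is totally ramified of degree $6$; each of the three root differences $3^{1/3}(\zeta_3^i - \zeta_3^j)$ has $v_3 = 5/6$, giving $v_3(\Delta) = 5$, but your ``$\geq 1/e$ per pair'' heuristic with $e = 6$ only certifies $v_3(\Delta) \geq 3 \cdot (2/6) = 1$, which is not $\geq 2$. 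So the counting alone does not yield the conclusion. To close the gap one needs a sharper input: for an irreducible factor $G$ of $F_1$ with $\bar G = (x+c)^m$, the associated local field $K = \Q_p[x]/G$ is totally ramified of degree $m$, and the different--discriminant bound gives $v_p(\Delta(G)) \geq v_p(\Delta_K) \geq m - 1$; combining this with the resultant contributions $v_p(\Res(G_i, G_j)) \geq 1$ between distinct factors reducing to powers of the same $(x+c)$ gives $v_p(\Delta(F_1)) \geq (k - r) + r(r-1) \geq 2$ for $k \geq 3$, where $r$ is the number of irreducible factors. For $h_1 = q^k$ with $\deg q = d \geq 2$, the clean route is to base-change to the unramified extension $\Q_{p^d}$, split $q$ into $d$ distinct linear factors, Hensel-factor $F_1 = \prod_{i=1}^d H_i$, and observe $v_p(\Delta(F_1)) = \sum_i v_p(\Delta(H_i)) \geq d \geq 2$ since each $\bar H_i$ is non-squarefree and the cross-resultants are units. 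With these replacements the proof is complete; without them, the case analysis does not establish $v_p \geq 2$ in the wildly ramified configurations.
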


Suppose first $p = 2$. In this case, the only element in $U_n(\F_2)$ is 
\[ x^n + x^{n - 1} + \ldots + 1 = \frac{x^{n + 1} - 1}{x - 1}, \]
    which is squarefree if and only if either $n$ is even or $n = 1$.
Thus $P_{n, 2}^{\sqf}$ equals $0$ if $n \geq 2$ is odd and $1$ if $n$ is even.

Suppose from now on that $p > 2$. Fix some $h\in U_n(\F_p)$. If $h$ is squarefree, then $\Delta(f)$ is a unit for every $f\in U_n(\Z_p)$ with $\bar{f} = h$. The number of such $h\in U_n(\F_p)$ is $$\sum_{u\in\F_p[x]_m}\mu(u)\#\{h\in U_n(\F_p)\colon u^2\mid h\}.$$ 
If $h = (x + c)^2g$ for some nonzero $c$ and squarefree $g\in \F_p[x]_m$ such that $x + c\nmid g$, then among all lifts $f\in U_n(\Z_p)$ with $\bar{f} = h$, a fraction of exactly $1/p$ of them satisfies $p^2\mid f(-\tilde{c})$. The number of such $h\in U_n(\F_p)$ is given by 
\begin{eqnarray*}
    &&\sum_{u\in\F_p[x]_m}\mu(u)\#\{h\in U_n(\F_p)\colon (x + c)^3\nmid h, u^2(x + c)^2\mid h\}\\
    &=&\sum_{\substack{u\in\F_p[x]_m\\ x + c\nmid u}}\mu(u)\Big(\#\{h\in U_n(\F_p)\colon u^2(x + c)^2\mid h\} - \#\{h\in U_n(\F_p)\colon u^2(x + c)^3\mid h\}\Big)
\end{eqnarray*}  Note if $h = x^2g$, then it has no lifts to $U_n(\Z_p)$. Hence, combining the above we have
\begin{eqnarray}
    \nonumber P_{n,p}^{\sqf}
    &=&\sum_{u\in\F_p[x]_m}\mu(u)\frac{\#\{h\in U_n(\F_p)\colon u^2\mid h\}}{|U_n(\F_p)|}\\
    \label{eq:Psqfnp}&&+\left(1 - \frac{1}{p}\right) \sum_{c \in \F_p^{\times}} \sum_{\substack{u \in \F_p[x]_m \\ x + c \nmid u}} \mu(u) \cdot \frac{\# \{h \in U_n(\F_p) : (x + c)^2 u^2 \mid h\}}{|U_n(\F_p)|}\\
    \nonumber&&-\left(1 - \frac{1}{p}\right) \sum_{c \in \F_p^{\times}} \sum_{\substack{u \in \F_p[x]_m \\ x + c \nmid u}} \mu(u) \cdot \frac{\# \{h \in U_n(\F_p) : (x + c)^3 u^2 \mid h\}}{|U_n(\F_p)|}
\end{eqnarray}
To simplify the above expression, we use the following lemma:

\begin{lemma}\label{lem:1}
Let $g \in \F_p[x]_m$ with degree $k$ such that $x \nmid g$.
Then for any $n \geq 2$,
\[ \sum_{u \in \F_p[x]_m} \mu(u) \cdot \frac{\# \{h \in U_n(\F_p) : gu^2 \mid h\}}{|U_n(\F_p)|} = \frac{p}{p + 1} \cdot \frac{1}{p^k} + O\left(\sum_{0\leq d\leq (n-k)/2} p^d \delta_{n,p}(2d + k) + \frac{1}{p^{(n + k)/2}}\right), \]
    and for any $c \in \F_p^{\times}$,
\[ \sum_{\substack{u \in \F_p[x]_m \\ x + c \nmid u}} \mu(u) \cdot \frac{\# \{h \in U_n(\F_p) : gu^2 \mid h\}}{|U_n(\F_p)|} = \frac{p^3}{(p - 1) (p + 1)^2} \cdot \frac{1}{p^k} + O\left(\sum_{0\leq d\leq (n-k)/2} p^d \delta_{n,p}(2d + k) + \frac{1}{p^{(n + k)/2}}\right), \]
    where the implied constants are absolute, and the sums on the right are empty sums if $n < k$.
\end{lemma}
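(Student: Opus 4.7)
The plan is to identify $\#\{h \in U_n(\F_p) : gu^2 \mid h\}$ with one of the counts $|\cA_n(v; \alpha)|$ appearing in the definition of $\delta_{n,p}$, substitute the discrepancy bound, and then evaluate the resulting M\"obius sum as an Euler product over $\F_p[x]$. First I would reduce to the relevant range of $u$: every $h \in U_n(\F_p)$ has nonzero constant term, and since $x \nmid g$ by hypothesis, the divisibility $gu^2 \mid h$ forces $x \nmid u$; since $h$ is monic of degree $n$, it also forces $\deg u \leq (n - k)/2$ (outside this range the set is empty, matching the ``empty sum'' convention for $n < k$). For $u$ in this range, writing $h = x^n + a_1 x^{n - 1} + \cdots + a_n$, the condition $gu^2 \mid h$ is equivalent to $a_1 x^{n - 1} + \cdots + a_n \equiv -x^n \pmod{gu^2}$, so
\[ \#\{h \in U_n(\F_p) : gu^2 \mid h\} = |\cA_n(gu^2;\, -x^n \bmod gu^2)|. \]
Since $x \nmid gu^2$, the definition of $\delta_{n,p}$ in~\eqref{eq:delta} then yields
\[ \frac{\#\{h \in U_n(\F_p) : gu^2 \mid h\}}{|U_n(\F_p)|} = \frac{1}{p^{k + 2\deg u}} + O\!\left(\delta_{n,p}(k + 2\deg u)\right). \]

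Next I would bound the total error. Since there are at most $p^d$ squarefree monic polynomials of degree $d$ in $\F_p[x]$ with $x \nmid u$, summing the $O(\delta_{n,p}(k + 2\deg u))$ contribution over $u$ produces $O\!\left(\sum_{0 \leq d \leq (n-k)/2} p^d \delta_{n,p}(k + 2d)\right)$, which is precisely the first piece of the claimed error.

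For the main term, factoring out $p^{-k}$ reduces the task to evaluating the truncated sum
\[ \sum_{\substack{u \in \F_p[x]_m \\ x \nmid u,\; \deg u \leq (n-k)/2}} \frac{\mu(u)}{p^{2\deg u}}. \]
Extending the sum to all squarefree $u$ with $x \nmid u$ introduces a tail error of $\sum_{d > (n-k)/2} p^{-d} \ll p^{-(n-k)/2}$, which contributes $O(p^{-(n+k)/2})$ after the $p^{-k}$ factor and matches the last piece of the claimed error. The completed sum factors as an Euler product over monic irreducibles $P \neq x$ in $\F_p[x]$:
\[ \prod_{P \neq x} \left(1 - \frac{1}{p^{2\deg P}}\right) = \zeta_{\F_p[x]}(2)^{-1} \cdot (1 - p^{-2})^{-1} = \frac{p - 1}{p} \cdot \frac{p^2}{p^2 - 1} = \frac{p}{p + 1}, \]
using $\zeta_{\F_p[x]}(s) = (1 - p^{1 - s})^{-1}$, giving the first main term. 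For the second equation, the additional constraint $x + c \nmid u$ simply removes the Euler factor at $P = x + c$, multiplying by another $(1 - p^{-2})^{-1}$ and producing $\frac{p^3}{(p - 1)(p + 1)^2}$. There is no serious obstacle here: once the identification with $\cA_n$ is in place, everything reduces to bookkeeping of degree ranges and a standard M\"obius/Euler product computation in $\F_p[x]$.
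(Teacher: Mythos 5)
Your proposal is correct and follows essentially the same route as the paper: identify $\#\{h\in U_n(\F_p):gu^2\mid h\}$ with $|\cA_n(gu^2;-x^n)|$, invoke the discrepancy bound for $\delta_{n,p}(2\deg u+k)$, sum the errors, bound the tail, and evaluate the completed sum. The only cosmetic difference is that the paper evaluates $\sum_{x\nmid u}\mu(u)T^{\deg u}$ (and the variant with $x+c\nmid u$) via the generating-series identity $L_{\mathbf{1}_x\cdot\mu}(T)=L_\mu(T)/(1+\mu(x)T)$, whereas you write the same quantity directly as an Euler product with the factors at $x$ (and $x+c$) removed; these are the same computation.
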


\begin{proof}
Given any $g,u$ not divisible by $x$, we note that
$$\#\{h\in U_n(\F_p)\colon gu^2\mid h\} = \#\{(a_1,\ldots,a_n)\in(\F_p\backslash\{0\})^n\colon a_1x^{n-1} + \cdots + a_n\equiv 0\pmod{gu^2}\}.$$
Hence by definition \eqref{eq:delta} of $\delta_{n,p}(d)$, we see that
$$\left|\frac{\#\{h\in U_n(\F_p)\colon gu^2\mid h\}}{|U_n(\F_p)|} - \frac{1}{p^{2\deg(u) + k}}\right|\leq \delta_{n,p}(2\deg(u) + k).$$
Note that $\#\{h\in U_n(\F_p)\colon gu^2\mid h\}$ can be nonzero only when $2\deg(u) + k\leq n$. Summing over all these $u$ gives
$$\sum_{\substack{u \in \F_p[x]_m \\ 2 \deg(u) + k \leq n}}\delta_{n,p}(2\deg(u) + k) = \sum_{0\leq d\leq (n-k)/2} p^d \delta_{n,p}(2d + k).$$
Thus it remains to show that
\begin{eqnarray*}
    \sum_{\substack{u \in \F_p[x]_m \\ x \nmid u \\ 2 \deg(u) + k \leq n}} \frac{\mu(u)}{p^{2 \deg(u) + k}}
    &=& \frac{p}{p + 1} \cdot \frac{1}{p^k} + O\left(\frac{1}{p^{(n + k)/2}}\right), \\
    \sum_{\substack{u \in \F_p[x]_m \\ x \nmid u, x + c \nmid u \\ 2 \deg(u) + k \leq n}} \frac{\mu(u)}{p^{2 \deg(u) + k}}
    &=& \frac{p^3}{(p - 1)(p + 1)^2} \cdot \frac{1}{p^k} + O\left(\frac{1}{p^{(n + k)/2}}\right).
\end{eqnarray*}
Notice that
\[ \left|\sum_{\substack{u \in \F_p[x]_m \\ 2 \deg(u) + k > n}} \frac{\mu(u)}{p^{2 \deg(u) + k}}\right| \leq \sum_{d > (n-k)/2} \frac{p^d}{p^{2d + k}}\leq \frac{2}{p^{(n + k)/2}}. \]
Thus, it now remains to show that
\begin{equation}\label{eq:LL}
    \sum_{\substack{u \in \F_p[x]_m \\ x \nmid u}} \frac{\mu(u)}{p^{2 \deg(u)}} = \frac{p}{p + 1} \qquad \text{and} \qquad
   \sum_{\substack{u \in \F_p[x]_m \\ x \nmid u, x + c \nmid u}} \frac{\mu(u)}{p^{2 \deg(u)}} = \frac{p^3}{(p - 1)(p + 1)^2}. 
\end{equation} 
   
For any arithmetic function $f : \F_p[x]_m \to \C$, denote its generating series by
\[ L_f(T) = \sum_{u \in \F_p[x]_m} f(u) \; T^{\deg(u)}.  \]
For any $u, v \in \F_p[x]_m$ define $\mathbf{1}_v(u)$ to be $1$ if $v \nmid u$ and $0$ otherwise. The two sums in \eqref{eq:LL} are then given by 
\[ L_{\mathbf{1}_x \cdot \mu}(1/p^2) \qquad \text{and} \qquad L_{\mathbf{1}_x \cdot \mathbf{1}_{x + c} \cdot \mu}(1/p^2). \]
Finally, note that
\[ L_{\mathbf{1}_x \cdot\mu}(T) = \frac{L_{\mu}(T)}{1 + \mu(x) \; T} = \frac{1 - pT}{1 - T},  \]
    and for $c \neq 0$,
\[ L_{\mathbf{1}_x\cdot \mathbf{1}_{x + c} \cdot\mu}(T) = \frac{L_{\mu}(T)}{(1 + \mu(x) \; T)(1 + \mu(x + c) \; T)} = \frac{1 - pT}{(1 - T)^2}.  \]
We are done after setting $T = 1/p^2$.
\end{proof}

Applying Lemma \ref{lem:1} to \eqref{eq:Psqfnp} gives
\begin{align*}
    P_{n,p}^{\sqf}
    &= \frac{p}{p + 1} + \left(1 - \frac{1}{p}\right)(p - 1) \cdot \frac{p^3}{(p - 1)(p + 1)^2} \cdot \frac{1}{p^2} - \left(1 - \frac{1}{p}\right)(p - 1) \cdot \frac{p^3}{(p - 1)(p + 1)^2} \cdot \frac{1}{p^3} \\
    &\qquad+ O\left(\sum_{d \leq  n/2} p^d \delta_{n,p}(2d) + p \sum_{d \leq  (n-2)/2} p^d \delta_{n,p}(2d + 2) + p \sum_{d \leq  (n-3)/2} p^d \delta_{n,p}(2d + 3) + \frac{1}{p^{n/2}} + \frac{p}{p^{(n + 2)/2}} + \frac{p}{p^{(n + 3)/2}}\right) \\
    &= \frac{p}{p + 1} + \left(1 - \frac{1}{p}\right)\frac{p - 1}{(p + 1)^2} + O\left(\sum_{d\leq n/2} p^d \delta_{n,p}(2d) + \sum_{1\leq d\leq (n-1)/2} p^d \delta_{n,p}(2d + 1) + \frac{1}{p^{n/2}}\right) \\
    &= 1 - \frac{3p - 1}{p(p + 1)^2} + O\left(\sum_{d = 0}^n p^{d/2} \delta_{n,p}(d) + \frac{1}{p^{n/2}}\right).
\end{align*}
This proves Theorem~\ref{thm:Psqf}.

\subsection{Proof of Theorem~\ref{thm:Pmax}}

Fix a prime $p$.
We start by stating Dedekind's criterion, which tells us for each $f \in \Z_p[x]$ monic whether $\Z_p[x]/(f(x))$ is the maximal order in $\Q_p[x]/(f(x))$.

\begin{lemma}[Dedekind's criterion](\cite[Corollary 3.2]{ABZ})\label{lem4:maximality-criterion}
Fix a prime $p$.
Let $f$ be a monic polynomial over $\Z_p$.
Then the ring $\Z_p[x]/(f(x))$ is the maximal order in $\Q_p[x]/(f(x))$ if and only if $f \notin (p, g)^2$ for any $g \in \Z_p[x]$ monic, where $(p, g)$ is the ideal of $\Z_p[x]$ generated by $p$ and $g$.
\end{lemma}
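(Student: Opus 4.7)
This lemma is the classical Dedekind criterion restated in an ideal-theoretic form, so my plan is to derive it from the concrete version: writing $\bar f=\prod_i \bar g_i^{e_i}$ for the factorization into distinct monic irreducibles in $\F_p[x]$, fixing monic lifts $g_i\in\Z_p[x]$, and setting $M=\prod_{e_i\geq 2}g_i$ and $r=(f-\prod_i g_i^{e_i})/p\in\Z_p[x]$, one has that $\Z_p[x]/(f)$ is the maximal order if and only if $\gcd(\bar M,\bar r)=1$ in $\F_p[x]$. I will treat this concrete form as a black box---it is typically proved by localizing $\Z_p[x]/(f)$ at each maximal ideal above $p$ and checking when the local ring is a DVR. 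What remains is to prove the equivalence
\[ \gcd(\bar M,\bar r)\neq 1 \quad\Longleftrightarrow\quad \text{there exists a monic }g\in\Z_p[x]\text{ with }f\in(p,g)^2. \]

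The forward direction is straightforward: a common factor $\bar g_i$ of $\bar M$ and $\bar r$ yields $e_i\geq 2$ and a decomposition $r=g_i q+ps$ in $\Z_p[x]$; substituting into $f=\prod_j g_j^{e_j}+pr$ and factoring $g_i^2$ out of $\prod_j g_j^{e_j}$ then displays $f$ as an element of $(p,g_i)^2$.

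For the converse, I plan to reduce to the case where $\bar g$ is irreducible, using the observation that $(p,g)\subseteq(p,h)$ for any monic $h$ whose mod-$p$ reduction divides $\bar g$ (since $g=hu+pv$ for some $u,v\in\Z_p[x]$). With $\bar g=\bar g_i$ irreducible and $f=p^2 a+pgb+g^2c$, reduction mod $p$ yields $\bar g^2\bar c=\bar f$, forcing $e_i\geq 2$ and $\bar c=\bar g^{\,e_i-2}\prod_{j\neq i}\bar g_j^{e_j}$, so in particular $\bar g=\bar g_i$ divides $\bar M$. Choosing the lift $c=g^{e_i-2}\prod_{j\neq i}g_j^{e_j}+pc'$ for some $c'\in\Z_p[x]$ makes $f-\prod_j g_j^{e_j}=p^2 a+pgb+p g^2 c'$ visibly equal to $p$ times an element of $g\Z_p[x]+p\Z_p[x]$, so $\bar r$ is divisible by $\bar g=\bar g_i$.

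The main obstacle will be the converse direction, specifically the passage from an arbitrary monic $g$ to a statement about the irreducible $g_i$ appearing in the canonical data $M$ and $r$. Once the reduction to irreducible $\bar g$ is carried out via the containment $(p,g)\subseteq(p,h)$ above, the remaining work is to pick the right lift of the quotient $\bar c=\bar f/\bar g^2$ so that the correction term $(g^2c-\prod_j g_j^{e_j})/p$ manifestly lies in $g\Z_p[x]+p\Z_p[x]$; once that lift is in hand the rest is direct substitution, and no choice of lift of the $g_j$'s themselves matters since altering a lift only shifts $r$ by a multiple of $p$.
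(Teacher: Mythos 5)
The paper offers no proof of this lemma; it is stated with a citation to \cite[Corollary~3.2]{ABZ}, so there is no internal argument to compare against. Your plan---reduce to the classical Dedekind criterion, prove the forward implication by exhibiting $f\in(p,g_i)^2$, and prove the converse by first shrinking to an irreducible $\bar g$ via the containment $(p,g)\subseteq(p,h)$---is a correct strategy, and both the forward direction and the containment observation check out.

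There is, however, a concrete flaw in the converse as written. After reducing to $\bar g=\bar g_i$ irreducible you write $c=g^{e_i-2}\prod_{j\neq i}g_j^{e_j}+pc'$ and conclude that $f-\prod_j g_j^{e_j}=p^2a+pgb+pg^2c'$. Substituting actually gives
\[
f-\prod_j g_j^{e_j}=p^2a+pgb+pg^2c'+\bigl(g^{e_i}-g_i^{e_i}\bigr)\prod_{j\neq i}g_j^{e_j},
\]
and the last term vanishes only if $g=g_i$ on the nose, not merely $\bar g=\bar g_i$. Your stated excuse---that ``altering a lift only shifts $r$ by a multiple of $p$''---is also false: if $g_j'=g_j+pw_j$ then
\[
\bar r'-\bar r=-\sum_j e_j\,\bar g_j^{\,e_j-1}\bar w_j\prod_{k\neq j}\bar g_k^{e_k},
\]
which is a nonzero shift of $\bar r$ in general. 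The correct lift-invariance statement is weaker and subtler: for each $i$ with $e_i\geq2$, divisibility $\bar g_i\mid\bar r$ is preserved, because in the sum above the $j=i$ term carries $\bar g_i^{\,e_i-1}$ with $e_i-1\geq1$, and every $j\neq i$ term carries $\bar g_i^{\,e_i}$ inside $\prod_{k\neq j}\bar g_k^{e_k}$. The quickest repair is to observe that your black-box criterion is intrinsically well-defined (the left side is a property of $\Z_p[x]/(f)$, independent of lifts), so you may simply set $g_i:=g$ and your displayed computation becomes exact. Alternatively, keep the original $g_i$, write $g=g_i+pw$, and track the correction: its contribution to $\bar r$ is $e_i\,\bar g_i^{\,e_i-1}\bar w\prod_{j\neq i}\bar g_j^{e_j}$, still divisible by $\bar g_i$ since $e_i\geq2$, and $\bar g_i\mid\gcd(\bar M,\bar r)$ follows. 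Either fix is routine, but as written the step fails.
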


\begin{lemma}(\cite[Lemma 3.3]{ABZ})\label{lem4:maximality-inclusion-exclusion-lemma}
Fix $g, h \in \Z_p[x]$ monic and coprime modulo $p$.
Then \[ (p, g)^2 \cap (p, h)^2 = (p, gh)^2. \]
\end{lemma}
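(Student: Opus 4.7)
The plan is to prove both inclusions via a short piece of commutative algebra, using the coprimality hypothesis in the form that the ideals $(p,g)$ and $(p,h)$ are comaximal in $\Z_p[x]$, and then exploiting the standard fact that comaximal ideals behave well under intersection, product, and powers.

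First, I would establish the comaximality claim $(p,g) + (p,h) = \Z_p[x]$. Since $\overline{g}, \overline{h}$ are coprime in the PID $\F_p[x]$, there exist $\overline{a}, \overline{b} \in \F_p[x]$ with $\overline{a}\overline{g} + \overline{b}\overline{h} = 1$. Lifting arbitrarily to $a,b \in \Z_p[x]$ gives $ag + bh = 1 + pc$ for some $c \in \Z_p[x]$, hence $1 = (ag + bh) - pc \in (p,g) + (p,h)$.

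Next I would record the two general facts I need. Fact (i): if $I + J = R$ in any commutative ring, then $I \cap J = IJ$. Indeed, writing $1 = \alpha + \beta$ with $\alpha \in I, \beta \in J$, for any $x \in I \cap J$ we have $x = x\alpha + x\beta$, and each summand lies in $IJ$ (using $x \in J, \alpha \in I$ and symmetrically). Fact (ii): if $I + J = R$, then $I^m + J^n = R$ for all $m,n \geq 1$; this follows by expanding $1 = (\alpha + \beta)^{m+n-1}$ via the binomial theorem and observing that each term lies in $I^m$ or in $J^n$ according to whether the exponent of $\alpha$ is at least $m$ or not.

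Then I would compute $(p,g)(p,h) = (p, gh)$. The inclusion $(p,g)(p,h) \subseteq (p,gh)$ is immediate since all four generators $p^2, pg, ph, gh$ lie in $(p,gh)$. For the reverse, $gh = g \cdot h \in (p,g)(p,h)$ trivially, while $p = p\alpha + p\beta$ with $p\alpha \in (p,h)\cdot(p,g)$ (because $p \in (p,h)$, $\alpha \in (p,g)$) and similarly $p\beta \in (p,g)(p,h)$, hence $p \in (p,g)(p,h)$.

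Finally I would combine the pieces. By Fact (ii), $(p,g)^2$ and $(p,h)^2$ are still comaximal, so Fact (i) gives
\[ (p,g)^2 \cap (p,h)^2 = (p,g)^2 (p,h)^2 = \bigl((p,g)(p,h)\bigr)^2 = (p,gh)^2, \]
which is the desired identity. The only substantive obstacle is step 1 (comaximality), which rests on the coprimality hypothesis; after that, the argument is purely formal commutative algebra that works over any base ring, so no $p$-adic subtleties intervene.
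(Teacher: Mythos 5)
Your proof is correct. Note that the paper does not prove this lemma at all; it simply cites \cite[Lemma 3.3]{ABZ}, so there is no in-text argument to compare against. Your proof is the natural one and is complete: the coprimality of $\bar g,\bar h$ in $\F_p[x]$ gives comaximality $(p,g)+(p,h)=\Z_p[x]$ via a lifted B\'ezout identity; the two standard facts (comaximal ideals satisfy $I\cap J=IJ$, and comaximality is preserved by taking powers) are correctly stated and proved; the identity $(p,g)(p,h)=(p,gh)$ is verified in both directions (the reverse inclusion again leaning on comaximality to recover $p$); and the final chain $(p,g)^2\cap(p,h)^2=(p,g)^2(p,h)^2=\bigl((p,g)(p,h)\bigr)^2=(p,gh)^2$ assembles the pieces cleanly. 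This is almost certainly the same argument used in \cite{ABZ}, and it supplies a self-contained justification for a step the paper takes on citation alone.
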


Lemma \ref{lem4:maximality-inclusion-exclusion-lemma} implies that we may use the inclusion-exclusion sieve to obtain
\begin{equation}\label{eq:Pmax}
    P_{n,p}^{\max} = \sum_{u\in\F_p[x]_m}\mu(u)\frac{\lambda_p(U_n(\Z_p)\cap(p, \tilde{u})^2)}{\lambda_p(U_n(\Z_p))}.
\end{equation}
where $\tilde{u}$ denotes any monic lift of $u$. If $x\mid u$ or $2\deg(u) > n$, we see that $U_n(\Z_p)\cap(p, \tilde{u})^2 = \emptyset$.

\begin{lemma}\label{lem4:maximality-ideal-measure}
Fix some $u \in \F_p[x]_m$ and a monic lift $\tilde{u} \in \Z_p[x]$ of $u$.
Then $$\frac{\lambda_p(U_n(\Z_p)\cap(p, \tilde{u})^2)}{\lambda_p(U_n(\Z_p))} = p^{-\deg(u)}\frac{\#\{h\in U_n(\F_p)\colon u^2\mid h\}}{|U_n(\F_p)|}.$$ 
\end{lemma}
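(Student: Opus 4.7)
The plan is to compute the numerator and denominator on the left separately by introducing polynomial-division coordinates on $V_n(\Z_p)$ adapted to the ideal $(p, \tilde{u})^2$, and then matching the outcome against the count appearing on the right-hand side. Let $d = \deg u$. The trivial cases $x \mid u$ or $2d > n$ force both sides to equal $0$: in the first case every $u^2 \overline{q}$ has vanishing constant coefficient, and in the second $\deg(u^2)$ exceeds $n$. So assume $x \nmid u$ and $2d \leq n$. Since $\tilde{u}^2$ is monic of degree $2d$, polynomial division gives every $f \in V_n(\Z_p)$ a unique decomposition $f = \tilde{u}^2 q + \tilde{u} R_1 + R_0$ with $q \in \Z_p[x]$ monic of degree $n - 2d$ and $R_0, R_1 \in \Z_p[x]$ of degree less than $d$. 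The resulting coordinate change from $(a_1, \ldots, a_n)$ to the coefficients of $(q, R_1, R_0)$ is $\Z_p$-linear with $\Z_p$-polynomial inverse (namely, reconstruction via $\tilde{u}^2 q + \tilde{u} R_1 + R_0$), so its Jacobian is a unit in $\Z_p$ and the change of variables preserves $\lambda_p$.

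Next, I would translate $f \in (p, \tilde{u})^2 = (p^2, p\tilde{u}, \tilde{u}^2)$ into these coordinates. Since $\tilde{u}^2 q \in (\tilde{u}^2)$, the condition is equivalent to $\tilde{u} R_1 + R_0 \in (p^2, p\tilde{u}, \tilde{u}^2)$, and I claim this is in turn equivalent to $R_0 \in p^2 \Z_p[x]$ and $R_1 \in p\Z_p[x]$. One direction is obvious. For the converse, write $\tilde{u} R_1 + R_0 = p^2 A + p\tilde{u} B + \tilde{u}^2 C$ and perform successive polynomial divisions of $A$ and then of $pA_1 + B$ by $\tilde{u}$; the degree constraint $\deg(\tilde{u} R_1 + R_0) < 2d$ forces the residual $\tilde{u}^2$-component to vanish, reducing the expression to $p^2 A_0 + p\tilde{u} D$ with $\deg A_0, \deg D < d$. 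Uniqueness of the $(R_0, R_1)$-decomposition then gives $R_0 = p^2 A_0$ and $R_1 = p D$. In particular this forces $\overline{R_0} = \overline{R_1} = 0$ in $\F_p[x]$, so $\overline{f} = u^2 \overline{q}$.

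From here the pieces assemble cleanly. Integrating over $R_0 \in p^2 \Z_p[x]_{<d}$ and $R_1 \in p \Z_p[x]_{<d}$ contributes a factor of $p^{-2d} \cdot p^{-d} = p^{-3d}$ to $\lambda_p$. The remaining condition $f \in U_n(\Z_p)$ is that every coefficient of $\overline{f}$ is nonzero in $\F_p$, which (given $\overline{f} = u^2 \overline{q}$) becomes $u^2 \overline{q} \in U_n(\F_p)$. As $\overline{q}$ ranges over monic polynomials in $\F_p[x]$ of degree $n - 2d$, the map $\overline{q} \mapsto u^2 \overline{q}$ is a bijection onto the monic degree-$n$ multiples of $u^2$, so the number of valid residue classes of $q$ equals $\#\{h \in U_n(\F_p) : u^2 \mid h\}$, each contributing $\lambda_p$-measure $p^{-(n - 2d)}$. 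Putting everything together,
\[ \lambda_p\bigl(U_n(\Z_p) \cap (p, \tilde{u})^2\bigr) = p^{-(n + d)} \cdot \#\{h \in U_n(\F_p) : u^2 \mid h\}, \]
and dividing by $\lambda_p(U_n(\Z_p)) = (p-1)^n / p^n = |U_n(\F_p)| / p^n$ yields the stated identity.

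The main obstacle is the identification $(p^2, p\tilde{u}, \tilde{u}^2) \cap \Z_p[x]_{<2d} = p^2 \Z_p[x]_{<d} \oplus p\tilde{u} \Z_p[x]_{<d}$ in the second paragraph: it is elementary but requires care with degrees and two applications of polynomial division. Once this is in hand, everything reduces to routine Haar-measure bookkeeping through the polynomial-division coordinates.
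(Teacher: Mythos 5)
Your proposal is correct and follows essentially the same route as the paper: both reduce to showing that, for each $h \in U_n(\F_p)$ with $u^2 \mid h$, exactly a fraction $p^{-\deg u}$ of the monic lifts of $h$ to $V_n(\Z_p)$ lies in $(p, \tilde{u})^2$. Your explicit polynomial-division coordinates $(q, R_1, R_0)$ simply spell out what the paper phrases more tersely through the ideal identity $(p, \tilde{u})^2 \cap (p) = (p)(p, \tilde{u})$.
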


\begin{proof}
If $x\mid u$ or $2\deg(u) > n$, then both sides are $0$. We may assume $x\nmid u$ and $2\deg(u)\leq n$.
For any $f\in (p,\tilde{u})^2$, we see that $u^2\mid \bar{f}$ in $\F_p[x]$. Hence we fix some $h\in (u^2)$ in $U_n(\F_p)$. Let $f_0$ be a lift of $h$ such that $\tilde{u}^2\mid f_0$, which implies $f_0\in U_n(\Z_p)\cap (p,\tilde{u})^2$. Any other lift of $h$ to $U_n(\Z_p)$ is of the form $f_0 + g$ where $\deg(g)\leq n$ and $g\in (p,\tilde{u})^2\cap (p) = (p)(p,\tilde{u})$. Since $\deg(u)\leq n$, we see that a fraction of $p^{-\deg(u)}$ of the lifts of $h$ to $U_n(\Z_p)$ belongs to $(p,\tilde{u})^2$.
\end{proof}

Applying Lemma \ref{lem4:maximality-ideal-measure} to \eqref{eq:Pmax} gives
\begin{equation}\label{eq:Pmax2}
     P_{n,p}^{\max} = \sum_{u \in \F_p[x]_m} \mu(u) p^{-\deg(u)} \cdot \frac{\#\{h \in U_n(\F_p) : u^2 \mid h\}}{|U_n(\F_p)|}.
\end{equation}
Note that if $x\mid u$ or $2\deg(u) > n$, then the term $\#\{h \in U_n(\F_p) : u^2 \mid h\} = 0$.

We first compute $P_{n,2}^{\max}$ using the above formula. In this case, $U_n(\F_2)$ is the singleton $\{h_0 = x^n + \cdots + x + 1\}$. Then \eqref{eq:Pmax2} simplifies to
\[ P_{n,2}^{\max} = \sum_{\substack{u \in \F_2[x]_m \\ u^2 \mid h_0}} \mu(u) \; 2^{-\deg(u)} = \prod_{\substack{g \in \F_2[x]_m \text{ irreducible} \\ g^2 \mid h_0}} \left(1 - \frac{1}{2^{\deg(g)}}\right). \]
If $n+1$ is odd, then $x^{n + 1} - 1 = (x - 1)h_0$ is squarefree in $\F_2[x]$, in which case the above product is empty, and thus $P_{n,2}^{\max} = 1$.
Suppose now $n+1$ is even. Write $n+1 = 2^{e + 1} t $ for some $e \geq 0$ and $t$ odd.
Then $x^{n + 1} - 1 = (x^t - 1)^{2^{e + 1}}$.
Thus, the irreducible polynomials that are counted in the above product formula are precisely ones dividing $x^t - 1$, except $x - 1$ if $e = 0$.
It remains to show the following:

\begin{lemma}
For any odd positive integer $t$,
\[ \prod_{\substack{g \in \F_2[x]_m \text{ irreducible} \\ g \mid x^t - 1}} \left(1 - \frac{1}{2^{\deg(g)}}\right) = \prod_{d \mid t} \left(1 - \frac{1}{2^{o_d(2)}}\right)^{\phi(d)/o_d(2)}. \]
In fact, both sides are equal to $|R^\times|/|R|$, where $R$ is the ring $\F_2[x]/(x^t - 1)$.
\end{lemma}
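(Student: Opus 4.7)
The plan is to prove both equalities by identifying both products with $|R^\times|/|R|$, where $R = \F_2[x]/(x^t - 1)$. The key structural input is that since $t$ is odd, $x^t - 1$ is separable over $\F_2$ (its derivative $tx^{t-1}$ is a unit times $x^{t-1}$, which is coprime to $x^t - 1$), so it factors as a product of \emph{distinct} monic irreducibles $g_1, \ldots, g_r$.

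First, I would handle the left-hand side. By the Chinese Remainder Theorem applied to $x^t - 1 = \prod_i g_i$ with the $g_i$ pairwise coprime, we have a ring isomorphism
\[ R \;\cong\; \prod_{i = 1}^r \F_2[x]/(g_i) \;\cong\; \prod_{i = 1}^r \F_{2^{\deg g_i}}. \]
From this $|R| = 2^{\sum_i \deg g_i} = 2^t$ and $|R^\times| = \prod_i (2^{\deg g_i} - 1)$, hence
\[ \frac{|R^\times|}{|R|} = \prod_{i = 1}^r \left(1 - \frac{1}{2^{\deg g_i}}\right), \]
which is precisely the left-hand product in the statement.

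Next, I would reorganize the same product according to the cyclotomic factorization $x^t - 1 = \prod_{d \mid t}\Phi_d(x)$, valid over $\F_2$ because $t$ is odd. The standard fact from the theory of finite fields is that $\Phi_d(x)$ factors over $\F_2$ into exactly $\phi(d)/o_d(2)$ distinct monic irreducibles, each of degree $o_d(2)$; this is because Frobenius acts on the primitive $d$-th roots of unity (living in $\overline{\F_2}$) with orbits of size equal to the multiplicative order $o_d(2)$ of $2$ modulo $d$. Substituting this into the computation above yields
\[ \prod_{i = 1}^r \left(1 - \frac{1}{2^{\deg g_i}}\right) \;=\; \prod_{d \mid t} \prod_{\substack{g \mid \Phi_d \\ g \text{ irred.}}} \left(1 - \frac{1}{2^{o_d(2)}}\right) \;=\; \prod_{d \mid t} \left(1 - \frac{1}{2^{o_d(2)}}\right)^{\phi(d)/o_d(2)}, \]
which is the right-hand side. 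Combining the two displays finishes the proof.

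The only nontrivial input is the cyclotomic factorization over $\F_2$, which is completely standard; no step requires more than routine verification, and the separability of $x^t - 1$ (which forces the cleanness of the CRT decomposition) is guaranteed by the hypothesis that $t$ is odd.
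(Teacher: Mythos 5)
Your proof is correct and follows essentially the same route as the paper's: both use the cyclotomic factorization $x^t - 1 = \prod_{d \mid t}\Phi_d(x)$ together with the standard fact that $\Phi_d$ splits over $\F_2$ into $\phi(d)/o_d(2)$ distinct irreducibles of degree $o_d(2)$, with squarefreeness of $x^t-1$ guaranteeing distinctness of factors. The only difference is that you spell out the CRT computation showing both products equal $|R^\times|/|R|$, which the paper asserts without detail; this is a welcome bit of extra rigor but not a different method.
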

\begin{proof}
By the theory of cyclotomic polynomials, we have $x^t - 1 = \prod_{d \mid t} \Phi_d(x)$ and each $\Phi_d(x)$ factors into $\phi(d)/o_d(2)$ monic irreducible polynomials of degree $o_d(2)$ in $\F_2[x]$. All the irreducible factors are distinct because $x^t - 1$ is squarefree.
\end{proof}

We now work on the case where $p$ is odd similar to the calculation of $P^{\sqf}_{n,p}$ in \S3.1. We have the bound
$$\left|\mu(u)p^{-\deg(u)}\left(\frac{\#\{h \in U_n(\F_p) : u^2 \mid h\}}{|U_n(\F_p)|} - \frac{1}{p^{2 \deg(u)}}\right)\right| \leq p^{-\deg(u)}\delta_{n,p}(2\deg(u))$$
for $x\nmid u$. Summing over $u\in\F_p[x]_m$ with $2\deg(u)\leq n$ gives a bound of
$$\sum_{\substack{u\in\F_p[x]_m\\ 2\deg(u)\leq n}}p^{-\deg(u)}\delta_{n,p}(2\deg(u)) \leq \sum_{0\leq d\leq n/2}\delta_{n,p}(2d).$$
Hence, we have
\[ P_{n,p}^{\max} 
    = \sum_{\substack{u \in \F_p[x]_m \\ x \nmid u \\ 2 \deg(u) \leq n}} \frac{\mu(u)}{p^{3 \deg(u)}} + O\left(\sum_{d = 0}^{\lfloor n/2 \rfloor} \delta_{n,p}(2d)\right). \]
Notice that
\[ \left|\sum_{\substack{u \in \F_p[x]_m\\ 2 \deg(u) > n}} \frac{\mu(u)}{p^{3 \deg(u)}}\right| \leq \sum_{d>n/2} \frac{p^d}{p^{3d}} \leq \frac{2}{p^{n}}. \]
Thus, it remains to compute
\[ \sum_{\substack{u \in \F_p[x]_m \\ x \nmid u}} \frac{\mu(u)}{p^{3 \deg(u)}} = L_{\mathbf{1}_x \cdot \mu}(1/p^3) = \frac{1 - 1/p^2}{1 - 1/p^3} = 1 - \frac{1}{p^2 + p + 1}. \]
This proves Theorem~\ref{thm:Pmax}.

\section{Equidistribution of polynomials with nonzero coefficients}

Recall the set
$$\cA_n(u;\alpha) = \{(a_1, a_2, \ldots, a_n) \in (\F_p \setminus \{0\})^n : a_1 x^{n - 1} + \ldots + a_n \equiv \alpha \bmod{u}\}$$
and the discrepancy of equidistribution
$$\delta_{n,p}(d) = \max_{\substack{u \in \F_p[x]_m \\ \deg(u) = d \\ x \nmid u}} \max_{\alpha \in \F_p[x]/(u)} \left|\frac{|\cA_{n}(u; \alpha)|}{(p - 1)^n} - \frac{1}{p^d}\right|.$$
Note that if $(a_1, \ldots, a_n), (b_1, \ldots, b_n) \in \cA_{n}(u; \alpha)$ where $\deg(u) = d$, then \[ a_1 x^{n - 1} + \ldots + a_n \equiv b_1 x^{n - 1} + \ldots + b_n \equiv \alpha \pmod{u}, \]
    and thus \[ u \mid (a_1 - b_1) x^{n - 1} + \ldots + (a_n - b_n). \]
By degree constraints, if $a_i = b_i$ for all $i \leq n - d$, then $(a_1, \ldots, a_n) = (b_1, \ldots, b_n)$. Hence the projection map $\cA_{n}(u; \alpha) \to (\F_p \setminus \{0\})^{n - d}$ defined by projecting into the first $n - d$ components is injective.
This proves that $|\cA_{n}(u; \alpha)| \leq (p - 1)^{n - d}$ and so we have the ``trivial'' bound
\begin{equation}
    \label{eq:triv}
    \delta_{n,p}(d) \leq \frac{1}{(p-1)^d}.
\end{equation}
The goal of this section is to prove the following estimates.

\begin{theorem}\label{thm3:matrix-p-large-bound}
For any odd prime $p$ and integers $n, d \geq 1$ with $n \geq 2d$, we have
\[ \delta_{n,p}(d) \leq \frac{1}{p^d} \left(\left(\frac{p}{p - 1}\right)^d - 1\right)^{\lfloor n/(2d) \rfloor}. \]
In particular, if $d\leq 4$ and $n \geq 16$, then
\[ \delta_{n,p}(d) \ll \frac{1}{p^{d + 2}} \]
with the implied constant absolute.
\end{theorem}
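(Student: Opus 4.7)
The plan is to model a uniform random tuple $(a_1, \ldots, a_n) \in (\F_p^\times)^n$ as a random walk of length $n$ on the ring $R := \F_p[x]/(u)$, tracking the image of $\sum_{i=1}^n a_i x^{n-i}$ modulo $u$. Since $x \nmid u$, multiplication by $x$ is a bijection on $R$. Writing $Q_n(\alpha) := |\cA_n(u;\alpha)|/(p-1)^n$ for the probability of landing at $\alpha$ after $n$ steps and $c := (p/(p-1))^d - 1$, the basic identity
\[ Q_n(\alpha) = \sum_{\gamma \in R} Q_{n-2d}(\gamma) \, Q_{2d}(\alpha - x^{2d}\gamma), \]
obtained by splitting $\sum_i a_i x^{n-i}$ into its top $n - 2d$ and bottom $2d$ coefficients, reduces the problem to understanding $Q_{2d}$ together with a contraction argument.

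The base case I would establish is $\delta_{2d,p}(d) \leq c/p^d$, via matching upper and lower bounds on $Q_{2d}$. The upper bound $Q_{2d}(\alpha) \leq 1/(p-1)^d$ is immediate from the trivial injectivity bound $|\cA_{2d}(u;\alpha)| \leq (p-1)^d$ stated in the paragraph leading to \eqref{eq:triv}. For the lower bound — nontrivial only when $c \leq 1$, i.e., $(p/(p-1))^d \leq 2$ — I would uniquely express each element of $\cA_{2d}(u;\alpha)$ as $p_1 + x^d p_2$ with $p_1, p_2$ polynomials of degree less than $d$ having all coefficients nonzero, and observe that $p_1 + x^d p_2 \equiv \alpha \pmod u$ forces $p_1 = (\alpha - x^d p_2) \bmod u$ via an affine bijection $\phi$ on the $d$-dimensional $\F_p$-space $V$ of degree-$<d$ polynomials (noting that $x^d$ is a unit in $R$). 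Letting $S \subset V$ denote the subset of polynomials with all coefficients nonzero (so $|S| = (p-1)^d$ out of $p^d$), this gives
\[ |\cA_{2d}(u;\alpha)| = |\{p_2 \in S : \phi(p_2) \in S\}| \geq |S| - (p^d - |S|) = 2(p-1)^d - p^d, \]
and a short algebraic manipulation (essentially $(2s - 1)(s - 1) \leq 0$ for $s := ((p-1)/p)^d \in [1/2, 1]$) confirms that this matches the required lower bound on $Q_{2d}$.

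For the contraction step I would prove $\delta_{n,p}(d) \leq c \, \delta_{n-2d,p}(d)$ for all $n \geq 2d$. Using $\sum_\gamma (Q_{n-2d}(\gamma) - 1/p^d) = 0$, the convolution identity can be rewritten as
\[ Q_n(\alpha) - \tfrac{1}{p^d} = \sum_{\gamma \in R} \Bigl(Q_{n-2d}(\gamma) - \tfrac{1}{p^d}\Bigr)\Bigl(Q_{2d}(\alpha - x^{2d}\gamma) - \tfrac{1}{(p-1)^d}\Bigr), \]
and since the second factor is always $\leq 0$, taking absolute values yields
\[ |Q_n(\alpha) - \tfrac{1}{p^d}| \leq \delta_{n-2d,p}(d) \sum_\gamma \Bigl(\tfrac{1}{(p-1)^d} - Q_{2d}(\alpha - x^{2d}\gamma)\Bigr) = c \cdot \delta_{n-2d,p}(d). \]
Iterating from the base case handles $n = 2dk$ directly. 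For $n = 2dk + r$ with $0 < r < 2d$, I would reduce $\delta_{2d+r,p}(d)$ by one further contraction step with the split of sizes $d$ and $d + r$ (which gives $\delta_{2d+r,p}(d) \leq c \cdot \delta_{d,p}(d) \leq c/p^d$ in the regime $c \leq 1$, since $Q_d$ takes only the values $0$ and $1/(p-1)^d$); in the complementary regime $c > 1$ the trivial one-sided bounds $0 \leq Q_n \leq 1/(p-1)^d$ already give $\delta_{n,p}(d) \leq c/p^d \leq c^{\lfloor n/(2d)\rfloor}/p^d$ whenever $n \geq d$.

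The main obstacle is the sharp lower bound $|\cA_{2d}(u;\alpha)| \geq 2(p-1)^d - p^d$ in the base case: this is precisely what fixes the block length at $2d$ and determines the explicit contraction ratio $c$. The ``In particular'' claim follows because $\lfloor n/(2d)\rfloor \geq 2$ when $d \leq 4$ and $n \geq 16$, and $c^2 \ll 1/p^2$ uniformly in $d \leq 4$ once $p$ is sufficiently large (since $c \approx d/(p-1)$); the finitely many small primes contribute only a bounded amount to the absolute implied constant via the trivial bound $\delta_{n,p}(d) \leq 1/(p-1)^d$.
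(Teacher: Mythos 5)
Your proposal is correct, and the underlying combinatorics is identical to the paper's: the same $2d$-block decomposition, the same two-sided bound $2(p-1)^d - p^d \leq |\cA_{2d}(u;\alpha)| \leq (p-1)^d$ obtained by viewing the count as the intersection of the set $S$ of all-nonzero degree-$<d$ polynomials with the image of $S$ under an affine bijection of $\F_p[x]/(u)$, and the same iteration in blocks of length $2d$. The difference is one of packaging: where the paper develops the doubly stochastic matrix framework and invokes sub-multiplicativity of the normalized max norm (Lemma~\ref{lem3:DSmatrix-3} and Corollary~\ref{cor3:DSmatrix-4}), you unwind this into the convolution identity $Q_n(\alpha) - p^{-d} = \sum_\gamma \bigl(Q_{n-2d}(\gamma) - p^{-d}\bigr)\bigl(Q_{2d}(\alpha - x^{2d}\gamma) - (p-1)^{-d}\bigr)$ and exploit the one-sided bound $Q_{2d}\le (p-1)^{-d}$ to extract the contraction factor $c = (p/(p-1))^d - 1$ directly; the case $c>1$ and the remainder $n \bmod 2d$ are handled by the trivial bounds exactly as in the paper's Corollary~\ref{cor3:DSmatrix-4}. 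This is a valid and self-contained reformulation of the same argument; the paper's more general matrix lemmas are set up mainly because they are reused to prove Theorem~\ref{thm3:matrix-p-small-bound}.
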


\begin{theorem}\label{thm3:matrix-p-small-bound}
For any odd prime $p$ and integers $n, d \geq 1$, we have
\[ \delta_{n,p}(d) \leq e^{1/3} \exp\left(-\frac{n}{(d^2 + d) p^{d^2}}\right). \]
\end{theorem}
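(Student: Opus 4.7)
The plan is to prove Theorem \ref{thm3:matrix-p-small-bound} via Fourier analysis on the additive group $\F_p[x]/(u)$, which is cleaner than the doubly stochastic matrix machinery hinted at in the introduction and in fact gives the strictly sharper bound $\delta_{n,p}(d)\leq (p-1)^{-\lfloor n/d\rfloor}$; the stated estimate then follows by comparing exponents.

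First, using orthogonality of the $p^d$ additive characters $\chi$ of $\F_p[x]/(u)$ together with the identity
\begin{equation*}
    \sum_{a\in\F_p\setminus\{0\}}\chi(ay) \;=\; \begin{cases}p-1,&\chi(y)=1,\\-1,&\chi(y)\neq 1,\end{cases}
\end{equation*}
one expands
\begin{equation*}
    |\cA_n(u;\alpha)| \;=\; \frac{1}{p^d}\sum_{\chi}\chi(-\alpha)\prod_{i=0}^{n-1}\Bigl(\sum_{a\in\F_p\setminus\{0\}}\chi(ax^i)\Bigr).
\end{equation*}
Isolating the trivial character's contribution of $(p-1)^n/p^d$ and bounding each remaining term in absolute value gives
\begin{equation*}
    \left|\frac{|\cA_n(u;\alpha)|}{(p-1)^n}-\frac{1}{p^d}\right|\;\leq\;\frac{1}{p^d}\sum_{\chi\neq 1}(p-1)^{-m_\chi},\qquad m_\chi\,:=\,\#\bigl\{0\leq i<n:\chi(x^i)\neq 1\bigr\}.
\end{equation*}

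The main step is to show $m_\chi\geq\lfloor n/d\rfloor$ for every nontrivial $\chi$. The key linear-algebra observation is that the $d$ consecutive powers $\{x^k,x^{k+1},\ldots,x^{k+d-1}\}$ form an $\F_p$-basis of $\F_p[x]/(u)$ for every $k\geq 0$: the standard basis $\{1,x,\ldots,x^{d-1}\}$ is a basis, and multiplication by $x^k$ is an $\F_p$-linear bijection because $x$ is a unit when $x\nmid u$. A nontrivial additive character cannot be identically $1$ on a basis, so any such window of length $d$ contains at least one $i$ with $\chi(x^i)\neq 1$; partitioning $\{0,1,\ldots,n-1\}$ into $\lfloor n/d\rfloor$ disjoint length-$d$ windows yields the claim. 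Combining with the previous display gives $\delta_{n,p}(d)\leq(p-1)^{-\lfloor n/d\rfloor}$, and a direct comparison of exponents, together with the trivial bound $\delta_{n,p}(d)\leq 1$ to cover the regime $n<d$ (where both sides exceed $1$), implies the statement of Theorem \ref{thm3:matrix-p-small-bound}.

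The main obstacle is spotting the basis observation; once in hand the rest is bookkeeping. An alternative Doeblin-type route via the doubly stochastic transition matrix $P$ of the random walk $v\mapsto xv+a$ on $\F_p[x]/(u)$ would instead require a uniform positive lower bound on the entries of $P^{k_0}$ for some explicit $k_0=O(d(d+1))$, equivalently the surjectivity of $(a_1,\ldots,a_{k_0})\mapsto\sum_{j}a_j\,x^{k_0-j}$ onto $\F_p[x]/(u)$; this is a genuinely more delicate combinatorial statement and produces precisely the weaker constants actually appearing in the theorem as stated.
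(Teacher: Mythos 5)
Your Fourier-analytic proof is correct and takes a genuinely different, and in fact cleaner, route than the paper's. The paper proves the bound via the doubly-stochastic-matrix framework of \S4.1: Lemma~\ref{lem3:Mu-Emin-bound-p-small} shows by an explicit two-step construction (choosing lifts $\eta_j$, then correction coefficients $c_j$) that every entry of $M_u^{d^2+d}$ is positive, hence $E_{\min}(\widetilde{M}_u^{d^2+d})\geq p^{-(d^2+d)}$; this is then fed into the Doeblin-type iteration of Lemma~\ref{lem3:Emin-product-stochastic-bound} to get $\delta_{n,p}(d)\leq\exp(-\lfloor n/(d^2+d)\rfloor/p^{d^2})$. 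Your approach instead expands $|\cA_n(u;\alpha)|$ by orthogonality over the $p^d$ additive characters of $\F_p[x]/(u)$, and the place where the invertibility of $x$ enters is the one-line observation that any window $\{x^k,\dots,x^{k+d-1}\}$ is an $\F_p$-basis, so a nontrivial character must be nontrivial somewhere in each window of length $d$; this gives $m_\chi\geq\lfloor n/d\rfloor$ and hence $\delta_{n,p}(d)\leq\frac{p^d-1}{p^d}(p-1)^{-\lfloor n/d\rfloor}$, which is strictly stronger than the stated bound. Your reduction to the paper's form does hold: writing $f(n)=\lfloor n/d\rfloor\log(p-1)-n/(d(d+1)p^{d^2})$, one checks $f(n)>-1/3$ using $\lfloor n/d\rfloor\geq n/(2d)$ for $n\geq d$ together with $(d+1)p^{d^2}\geq 6$ and $\log(p-1)\geq\log 2$, and for $n<d$ using $\lfloor n/d\rfloor=0$ and $n/(d(d+1)p^{d^2})<1/6$. (Your parenthetical "both sides exceed $1$" is slightly off --- your bound equals $1-p^{-d}<1$ when $n<d$ --- but this only helps.) One caveat worth noting: your bound does not supersede the companion Theorem~\ref{thm3:matrix-p-large-bound}, which for large $p$ and small $d$ gives decay $\sim p^{-d}(d/p)^{\lfloor n/(2d)\rfloor}$, much better than $(p-1)^{-\lfloor n/d\rfloor}$ in that regime; the two theorems really do cover complementary ranges of $p$ versus $n$, and you have only replaced the second.
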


\subsection{Doubly stochastic matrices}

Fix some $u \in \F_p[x]_m$ such that $x \nmid u$. Let $G_u$ be the directed graph with vertex set $V(G_u) = \F_p[x]/(u)$ and an edge from $\alpha$ to $\beta$ if $\beta = \alpha x + c$ for some $c \in \F_p \setminus \{0\}$ (where we allow loops).
Notice that a path of length $n$ from a vertex $\alpha$ to another vertex $\beta$ corresponds to $(a_1, a_2, \ldots, a_n) \in (\F_p \setminus \{0\})^n$ such that
\[ \beta \equiv \alpha x^n + a_1 x^{n - 1} + a_2 x^{n - 2} + \ldots + a_n \pmod{u}. \]
The graph $G_u$ is $(p - 1)$-regular.
That is, each vertex has out-degree $p - 1$, and since $x$ is invertible mod $u$, each vertex also has in-degree $p - 1$.

Let $M_u$ denote the adjacency matrix of $G_u$, and let $\widetilde{M}_u = (p - 1)^{-1} M_u$.
The $(\beta, \alpha)$-entry of $M_u$ is $1$ if $\beta = \alpha x + c$ for some $c \in \F_p$ non-zero, and $0$ otherwise.
Since the out-degree and in-degree of every vertex is $p-1$, we see that $\widetilde{M}_u$ is a doubly stochastic matrix.
That is, its entries are all non-negative, and the sum of the entries in each row or column is $1$.
The $(\beta,\alpha)$-entry of $M_u^n$ is the number of paths from $\alpha$ to $\beta$. In particular,
\[ \mbox{the }(\beta, 0)\mbox{-entry of }M_u^n = \#\{(a_1, \ldots, a_n) \in (\F_p \setminus \{0\})^n : a_1 x^{n - 1} + \ldots + a_n \equiv \beta \bmod{u}\} = |\cA_{n}(u; \beta)|. \]
Let $J_N$ be the all-one matrix of dimension $N = |V(G_u)| = p^{\deg(u)}$, and let $\tilde{J}_N = (1/N) J_N$.
Then the $(\beta,0)$-entry of $\widetilde{M}_u^n - \tilde{J}_N$ is exactly
$$\frac{|\cA_n(u;\beta)|}{(p-1)^n} - \frac{1}{p^{\deg(u)}}.$$
Therefore, $\delta_{n,p}(d)$ is bounded above by the maximum of the absolute values of the entries of $\widetilde{M}_u^n - \tilde{J}_N$ as $u$ varies in $\F_p[x]_m$ such that $\deg(u) = d$ and $x\nmid u$.

We start by recalling some general theory on doubly stochastic matrix.

\begin{definition}
    {\em A real matrix $M$ is a \emph{doubly stochastic matrix} if its entries are non-negative and the sum of the entries in each row or column is $1$.}
\end{definition}

Suppose $M\in M_m(\R)$. Let $J_m$ be the all-one matrix of dimension $m$ and let $\tilde{J}_m = (1/m) J_m$. Then $\tilde{J}_m^2 = \tilde{J}_m$. Moreover, if all the entries of $M$ are non-negative, then $M$ is doubly stochastic if and only if $M \tilde{J}_m = \tilde{J}_m M = \tilde{J}_m$. Given two doubly stochastic matrices $M_1$ and $M_2$ of dimension $m$, $M_1 M_2$ is also a doubly stochastic matrix, and we have the formula
\[ (M_1 - \tilde{J}_m)(M_2 - \tilde{J}_m) = M_1 M_2 - \tilde{J}_m. \]
In particular, by induction, if $M$ is a doubly stochastic matrix of dimension $m$, then $M^k - \tilde{J}_m = (M - \tilde{J}_m)^k$ for any positive integer $k$.

For any matrices $M \in M_m(\R)$, we denote the minimum entry, maximum entry, and the normalized max norm respectively by
\[ E_{\min}(M) = \min_{i, j} M_{ij}, \qquad E_{\max}(M) = \max_{i, j} M_{ij}, \qquad \|M\|_{\max} = m\cdot\max_{i, j} |M_{ij}|, \]
    where $M_{ij}$ is the $(i, j)$-entry of $M$.
It is easy to see that
\begin{equation}\label{eq:Mmax}
    \|M\|_{\max} = m \cdot \max\{|E_{\min}(M)|, |E_{\max}(M)|\}. 
\end{equation} 
We prove first that multiplying by a doubly stochastic matrix does not increase the normalized max norm.

\begin{lemma}\label{lem3:DSmatrix-2}
For any $M_1, M_2 \in M_m(\R)$ with $M_2$ doubly stochastic, we have
\[ E_{\min}(M_1 M_2), E_{\min}(M_2 M_1) \geq E_{\min}(M_1) \quad \text{ and } \quad E_{\max}(M_1 M_2), E_{\max}(M_2 M_1) \leq E_{\max}(M_1).
\]
In particular, $\|M_1 M_2\|_{\max} \leq \|M_1\|_{\max}$ and $\|M_2 M_1\|_{\max} \leq \|M_1\|_{\max}$.
\end{lemma}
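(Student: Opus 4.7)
The plan is to exploit the doubly stochastic hypothesis on $M_2$ in the most direct way: expand each entry of the product as a convex combination of entries of a single row or column of $M_1$. Once we show that every entry of $M_1M_2$ or $M_2M_1$ lies in the interval $[E_{\min}(M_1), E_{\max}(M_1)]$, the four displayed inequalities drop out at once.

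Concretely, I would first write
\[ (M_1 M_2)_{ij} = \sum_{k = 1}^{m} (M_1)_{ik}\, (M_2)_{kj}, \]
and invoke the doubly stochastic property of $M_2$: the coefficients $(M_2)_{kj}$ are non-negative and, summed over $k$, equal $1$. Hence this entry is a convex combination of the entries of the $i$-th row of $M_1$, so it lies in $[E_{\min}(M_1), E_{\max}(M_1)]$. Taking minima and maxima over $(i,j)$ yields $E_{\min}(M_1 M_2) \geq E_{\min}(M_1)$ and $E_{\max}(M_1 M_2) \leq E_{\max}(M_1)$ simultaneously. Repeating the calculation for $(M_2 M_1)_{ij} = \sum_k (M_2)_{ik} (M_1)_{kj}$, I would this time use that the $i$-th row of $M_2$ has non-negative entries summing to $1$ and read off the analogous bounds, now realizing the entry as a convex combination of the $j$-th column of $M_1$.

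The statement on $\|\cdot\|_{\max}$ is then an immediate consequence: every entry of $M_1 M_2$ or $M_2 M_1$ lies in $[E_{\min}(M_1), E_{\max}(M_1)]$ and therefore has absolute value at most $\max\{|E_{\min}(M_1)|, |E_{\max}(M_1)|\}$, so multiplying by $m$ and applying \eqref{eq:Mmax} gives both norm inequalities. There is no real obstacle here; the lemma is essentially the defining property of convex combinations, and the proof is complete once the two matrix products are written out.
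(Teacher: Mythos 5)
Your proof is correct and matches the paper's argument essentially verbatim: both expand each entry of the product as a weighted average of entries of $M_1$ using the non-negativity and row/column sums of $M_2$, then deduce the norm bound from \eqref{eq:Mmax}. The ``convex combination'' phrasing is just a slightly more packaged way of stating the same inequality the paper writes out.
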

\begin{proof}
We first prove $E_{\min}(M_1 M_2) \geq E_{\min}(M_1)$.
Indeed, for any $i, j$, since $(M_2)_{ij}\geq 0$,
\[ (M_1 M_2)_{ij} = \sum_{k = 1}^m (M_1)_{ik} (M_2)_{kj} \geq E_{\min}(M_1) \sum_{k = 1}^m (M_2)_{kj} = E_{\min}(M_1). \]
By similar method, we get $E_{\min}(M_2 M_1) \geq E_{\min}(M_1)$ and $E_{\max}(M_1 M_2), E_{\max}(M_2 M_1) \leq E_{\max}(M_1)$. Then from $$E_{\min}(M_1)\leq E_{\min}(M_1M_2)\leq E_{\max}(M_1M_2)\leq E_{\max}(M_1)$$
and \eqref{eq:Mmax}, we get $\|M_1 M_2\|_{\max}\leq \|M_1 \|_{\max}$ and similarly for $\|M_2 M_1\|_{\max}$.
\end{proof}

Next we prove that the normalized max norm is sub-multiplicative.
\begin{lemma}\label{lem3:DSmatrix-3}
For any $M_1, M_2 \in M_m(\R)$, we have $\|M_1 M_2\|_{\max} \leq \|M_1\|_{\max} \|M_2\|_{\max}$.
In particular, for any $M \in M_m(\R)$ and $n \geq 0$, we have $\|M^n\|_{\max} \leq \|M\|_{\max}^n$.
\end{lemma}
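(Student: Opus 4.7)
The plan is to unpack the definition $\|M\|_{\max} = m \cdot \max_{i,j}|M_{ij}|$, which immediately gives the entry-wise bound $|M_{ij}| \leq \|M\|_{\max}/m$ for every pair $(i,j)$. Applying this bound to the product entry
\[
(M_1 M_2)_{ij} = \sum_{k=1}^m (M_1)_{ik}(M_2)_{kj}
\]
via the triangle inequality, each of the $m$ summands has absolute value at most $\|M_1\|_{\max}\|M_2\|_{\max}/m^2$, so the whole sum is at most $\|M_1\|_{\max}\|M_2\|_{\max}/m$. Taking the maximum over $i,j$ and multiplying through by $m$ yields $\|M_1 M_2\|_{\max} \leq \|M_1\|_{\max}\|M_2\|_{\max}$, which is the first claim.

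For the iterated statement, I would argue by induction on $n$, with trivial base case $n = 1$, and inductive step
\[
\|M^n\|_{\max} = \|M^{n-1} M\|_{\max} \leq \|M^{n-1}\|_{\max}\,\|M\|_{\max} \leq \|M\|_{\max}^{n-1}\,\|M\|_{\max} = \|M\|_{\max}^n,
\]
applying the first part of the lemma in the middle inequality. There is no genuine obstacle here: the entire argument is a single application of the triangle inequality, with the factor of $m$ built into the normalization of $\|\cdot\|_{\max}$ exactly compensating for the $m$ terms in the inner product sum.
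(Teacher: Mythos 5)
Your argument matches the paper's proof exactly: both bound $|(M_1M_2)_{ij}|$ entry-by-entry via the triangle inequality, with the $m$ terms in the sum cancelling the normalization factor, and then pass to $\|M^n\|_{\max}$ by induction. No differences worth noting.
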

\begin{proof}
The second statement follows from the first statement by induction on $n$.
Thus, it remains to prove the first statement.
For any $i, j \leq m$, we have
\[ |(M_1 M_2)_{ij}| \leq \sum_{k = 1}^m |(M_1)_{ik}| |(M_2)_{kj}| \leq m \cdot \frac{\|M_1\|_{\max}}{m} \cdot \frac{\|M_2\|_{\max}}{m} = \frac{\|M_1\|_{\max} \|M_2\|_{\max}}{m}, \]
    and thus $\|M_1 M_2\|_{\max} \leq \|M_1\|_{\max} \|M_2\|_{\max}$.
\end{proof}

\begin{corollary}\label{cor3:DSmatrix-4}
For any $M \in M_m(\R)$ doubly stochastic and integers $k \geq 0$ and $\ell > 0$, we have
\[ \|M^k - \tilde{J}_m\|_{\max} \leq \|M^\ell - \tilde{J}_m\|_{\max}^{\lfloor k/\ell \rfloor}. \]
\end{corollary}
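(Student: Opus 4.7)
The plan is to divide $k$ with remainder, writing $k = q\ell + r$ where $q = \lfloor k/\ell \rfloor$ and $0 \le r < \ell$, and then reduce the task to bounding $\|M^{q\ell} - \tilde{J}_m\|_{\max}$, which naturally factors as a $q$-fold product via the identity $(M_1 - \tilde{J}_m)(M_2 - \tilde{J}_m) = M_1 M_2 - \tilde{J}_m$ established for doubly stochastic $M_1, M_2$ in the discussion preceding Lemma \ref{lem3:DSmatrix-2}.

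The first step is to peel off the remainder $r$. Since $M^r$ is doubly stochastic, $M^r \tilde{J}_m = \tilde{J}_m$, so
\[ M^k - \tilde{J}_m = M^r M^{q\ell} - M^r \tilde{J}_m = M^r(M^{q\ell} - \tilde{J}_m). \]
Applying Lemma \ref{lem3:DSmatrix-2} with the doubly stochastic matrix $M^r$ gives $\|M^k - \tilde{J}_m\|_{\max} \le \|M^{q\ell} - \tilde{J}_m\|_{\max}$. In the second step, I would iterate the bilinear identity above with $M_1 = \cdots = M_q = M^\ell$ (each doubly stochastic) to obtain $(M^\ell - \tilde{J}_m)^q = M^{q\ell} - \tilde{J}_m$, and then apply Lemma \ref{lem3:DSmatrix-3} on sub-multiplicativity to conclude
\[ \|M^{q\ell} - \tilde{J}_m\|_{\max} = \|(M^\ell - \tilde{J}_m)^q\|_{\max} \le \|M^\ell - \tilde{J}_m\|_{\max}^q. \]
Chaining the two bounds yields the claim.

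There is essentially no obstacle here: both lemmas have already been set up, and the only observation that needs care is that the remainder $r$ should be absorbed as a doubly stochastic factor on the left, so that Lemma \ref{lem3:DSmatrix-2} applies and the max norm cannot grow, leaving the clean $q$-th power on the right.
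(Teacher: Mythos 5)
Your proposal is correct and follows essentially the same route as the paper: write $k = q\ell + r$, use $M^r\tilde J_m = \tilde J_m$ to factor $M^k - \tilde J_m = M^r(M^{q\ell} - \tilde J_m)$, apply Lemma~\ref{lem3:DSmatrix-2} to drop the $M^r$ factor, then use the identity $(M^\ell - \tilde J_m)^q = M^{q\ell} - \tilde J_m$ together with sub-multiplicativity (Lemma~\ref{lem3:DSmatrix-3}). No differences of substance.
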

\begin{proof}
We can write $k = \ell q + r$, where $q = \lfloor k/\ell \rfloor$ and $\ell > r \geq 0$.
Then $M^r$ is doubly stochastic and we have $$\|M^k - \tilde{J}_m\|_{\max} = \|M^r(M^{\ell q} - \tilde{J}_m)\|_{\max}\leq \|(M^\ell - \tilde{J}_m)^q\|_{\max}\leq \|M^\ell - \tilde{J}_m\|^q_{\max}$$ by Lemma~\ref{lem3:DSmatrix-2} and Lemma~\ref{lem3:DSmatrix-3}.
\end{proof}

We conclude with a lower bound on $E_{\min}(M^n)$ in terms of $E_{\min}(M)$.

\begin{lemma}\label{lem3:Emin-product-stochastic-bound}
For any doubly stochastic matrices $M_1, M_2 \in M_m(\R)$,
\[ 1 - m E_{\min}(M_1 M_2) \leq (1 - m E_{\min}(M_1))(1 - m E_{\min}(M_2)). \]
In particular, for any doubly stochastic matrix $M \in M_m(\R)$ and $n \geq 0$,
\[ E_{\min}(M^n) \geq \frac{1 - (1 - m E_{\min}(M))^n}{m}. \]
\end{lemma}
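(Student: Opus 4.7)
The plan is to prove the first inequality by explicitly parametrizing the entries of each matrix in terms of its minimum entry, and then to bootstrap to the statement about $M^n$ by a short induction. Set $a = E_{\min}(M_1)$ and $b = E_{\min}(M_2)$, and write $(M_1)_{ik} = a + x_{ik}$ and $(M_2)_{kj} = b + y_{kj}$ with $x_{ik}, y_{kj} \geq 0$. Because $M_1$ and $M_2$ are doubly stochastic, the row sums of $M_1$ give $\sum_k x_{ik} = 1 - ma$ and the column sums of $M_2$ give $\sum_k y_{kj} = 1 - mb$.

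Now I would expand the product entrywise:
\[ (M_1 M_2)_{ij} = \sum_{k=1}^m (a + x_{ik})(b + y_{kj}) = mab + a(1 - mb) + b(1 - ma) + \sum_{k=1}^m x_{ik} y_{kj} = a + b - mab + \sum_{k=1}^m x_{ik} y_{kj}. \]
The key observation is that the cross-term $\sum_k x_{ik} y_{kj}$ is nonnegative, so minimizing over $i, j$ gives $E_{\min}(M_1 M_2) \geq a + b - mab$, which rearranges to the desired inequality $1 - m E_{\min}(M_1 M_2) \leq (1 - ma)(1 - mb)$.

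For the iterated bound I would induct on $n$. The base case $n = 0$ is trivial: $M^0 = I_m$ has $E_{\min}(I_m) = 0$ for $m \geq 2$ (and the statement is vacuous if $m = 1$), matching the right-hand side $(1 - 1)/m = 0$; the case $n = 1$ is immediate. For the inductive step, observe that $M^{n-1}$ is doubly stochastic (products of doubly stochastic matrices are doubly stochastic), apply the first part with $M_1 = M$ and $M_2 = M^{n-1}$, and substitute the inductive hypothesis $1 - m E_{\min}(M^{n-1}) \leq (1 - m E_{\min}(M))^{n-1}$ to obtain $1 - m E_{\min}(M^n) \leq (1 - m E_{\min}(M))^n$, which rearranges to the claimed bound. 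The only substantive step in the whole argument is the algebraic identity for $(M_1 M_2)_{ij}$; there is no real obstacle, since once one writes the entries as $a + x_{ik}$ and $b + y_{kj}$, the cross-terms $a(1-mb)$ and $b(1-ma)$ collapse exactly so as to exhibit a nonnegative remainder.
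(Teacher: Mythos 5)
Your proof is correct, and it takes a cleaner route than the paper's. The paper fixes a row of $M_1$ and a column of $M_2$ achieving the minimum, invokes the rearrangement inequality to reduce to the case where the two sequences are oppositely ordered, and then performs a somewhat intricate telescoping computation with $b_1$ and $a_m$ to reach the factorization $(1-mb_1)(1-ma_m)$. Your decomposition $(M_1)_{ik} = a + x_{ik}$, $(M_2)_{kj} = b + y_{kj}$ with $x, y \geq 0$ makes the structure transparent: the doubly stochastic row/column sums force $\sum_k x_{ik} = 1 - ma$ and $\sum_k y_{kj} = 1 - mb$, the cross terms collapse to $a + b - mab$, and the remaining $\sum_k x_{ik} y_{kj}$ is manifestly nonnegative. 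This proves the bound for \emph{every} entry at once, so the minimization is immediate, and no sorting or rearrangement inequality is needed. The inductive step to $M^n$ is identical in both. Your argument is a genuine simplification of the paper's.
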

\begin{proof}
Assume $m > 1$, as we are done otherwise.
The second statement follows from the first statement by induction on $n$.
We now show the first statement.

We write $r_1 = E_{\min}(M_1)$ and $r_2 = E_{\min}(M_2)$.
Suppose $(M_1 M_2)_{\min}=\sum_{i = 1}^m a_i b_i$, where the $a_i$'s are entries of a row of $M_1$ and the $b_i$'s are entries of a column of $M_2$.
In particular, $a_i \geq r_1$ and $b_i \geq r_2$ for each $i \leq m$.
By the rearrangement inequality, it suffices to prove that
$$1 - m\sum_{i=1}^ma_ib_i \leq (1 - mr_1)(1 - mr_2)$$
under the assumption that $a_1 \geq a_2 \geq \ldots \geq a_m \geq r_1$ and $r_2 \leq b_1 \leq b_2 \leq \ldots \leq b_m$.
We have
\begin{align*}
    \sum_{i = 1}^m a_i b_i
    &= \left(1 - \sum_{i = 2}^m a_i\right) b_1 + \sum_{i = 2}^{m - 1} a_i b_i + a_m \left(1 - \sum_{i = 1}^{m - 1} b_i\right) \\
    &= b_1 + a_m - 2 b_1 a_m + \sum_{i = 2}^{m - 1} (a_i b_i - a_i b_1 - a_m b_i) \\
    &= b_1 + a_m - 2 b_1 a_m + \sum_{i = 2}^{m - 1} ((a_i - a_m)(b_i - b_1) - b_1 a_m) \\
    &\geq b_1 + a_m - m b_1 a_m.
\end{align*}
Then
$$ 1 - m \sum_{i = 1}^m a_i b_i \leq 1 - mb_1 - ma_m + m^2 b_1 a_m = (1 - mb_1)(1 - ma_m)\leq(1 - mr_1)(1 - mr_2), $$
where the last inequality follows because $$1 - mr_2\geq1 - mb_1\geq 1-b_1-\cdots-b_m = 0$$ and similarly $1 - mr_1\geq 1 - ma_m\geq 0$.
\end{proof}

\subsection{Proof of Theorem~\ref{thm3:matrix-p-large-bound} and Theorem~\ref{thm3:matrix-p-small-bound} }

We now prove Theorem~\ref{thm3:matrix-p-large-bound}.
We start by proving:

\begin{lemma}\label{thm3:Mu-Emin-bound-p-large}
For any $u \in \F_p[x]_m$ of degree $d$ such that $x \nmid u$, we have
\[ E_{\min}(M_u^{2d}) \geq 2 (p - 1)^d - p^d \quad \text{ and } \quad E_{\max}(M_u^{2d}) \leq (p - 1)^d. \]
In particular, 
\[ \|\widetilde{M}_u^{2d} - \tilde{J}_{p^d}\|_{\max} \leq \left(\frac{p}{p - 1}\right)^d - 1. \]
\end{lemma}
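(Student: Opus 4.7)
The plan is to unwind the combinatorial meaning of the entries of $M_u^{2d}$ and then apply a simple inclusion-exclusion. By the description of paths in $G_u$, the $(\beta,\alpha)$-entry of $M_u^{2d}$ counts the tuples $(a_1, \ldots, a_{2d}) \in (\F_p \setminus \{0\})^{2d}$ satisfying
\[ \alpha x^{2d} + a_1 x^{2d - 1} + \cdots + a_{2d} \equiv \beta \pmod{u}. \]
The key idea is to split this tuple into two halves of length $d$, matching $\deg u = d$. Writing $A(x) = a_1 x^{d - 1} + \cdots + a_d$ and $B(x) = a_{d + 1} x^{d - 1} + \cdots + a_{2d}$, the congruence becomes
\[ x^d A(x) + B(x) \equiv \gamma \pmod{u}, \qquad \gamma := \beta - \alpha x^{2d}. \]

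Since $\deg A, \deg B < d = \deg u$, both $A$ and $B$ are naturally elements of $\F_p[x]/(u)$, and together they range over $S \times S$ where $S \subseteq \F_p[x]/(u)$ is the set of polynomials of degree $< d$ whose $d$ coefficients are all nonzero, so $|S| = (p - 1)^d$. Once $A$ is chosen, the constraint $B \equiv \gamma - x^d A \pmod u$ together with $\deg B < d$ determines $B$ uniquely. I would then introduce the map
\[ \phi_\gamma : \F_p[x]/(u) \to \F_p[x]/(u), \qquad A \mapsto (\gamma - x^d A) \bmod u. \]
Since $x \nmid u$, the element $x^d$ is a unit in $\F_p[x]/(u)$, so $\phi_\gamma$ is an affine bijection. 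Therefore the $(\beta,\alpha)$-entry of $M_u^{2d}$ equals $|S \cap \phi_\gamma^{-1}(S)|$.

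From this identification, the upper bound $E_{\max}(M_u^{2d}) \leq (p - 1)^d$ is immediate, since $|S \cap \phi_\gamma^{-1}(S)| \leq |S|$. For the lower bound, applying the two-set inclusion-exclusion in the ambient set $\F_p[x]/(u)$ of size $p^d$, and using $|\phi_\gamma^{-1}(S)| = |S|$, yields
\[ |S \cap \phi_\gamma^{-1}(S)| \geq |S| + |\phi_\gamma^{-1}(S)| - p^d = 2(p - 1)^d - p^d. \]
This establishes the two entry bounds of the lemma.

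The ``in particular'' statement should then be a routine algebraic check. Dividing through by $(p - 1)^{2d}$ gives $E_{\max}(\widetilde{M}_u^{2d}) \leq (p - 1)^{-d}$ and $E_{\min}(\widetilde{M}_u^{2d}) \geq (2(p - 1)^d - p^d)/(p - 1)^{2d}$. Using that each row of $\widetilde{M}_u^{2d}$ sums to $1$, so the row average equals $1/p^d$, one has
\[ \|\widetilde{M}_u^{2d} - \tilde{J}_{p^d}\|_{\max} = \max\bigl(p^d E_{\max}(\widetilde{M}_u^{2d}) - 1,\; 1 - p^d E_{\min}(\widetilde{M}_u^{2d})\bigr), \]
and, setting $r = (p/(p - 1))^d$, I would verify that both terms are $\leq r - 1$: the first gives $r - 1$ directly, while for the second one gets $1 - (2r - r^2) = (r - 1)^2 \leq r - 1$ when $r \leq 2$, and when $r > 2$ the trivial bound $E_{\min} \geq 0$ gives a quantity $\leq 1 < r - 1$. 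The only genuinely nontrivial step in the whole argument is the split-and-match idea above; once the path of length $2d$ is chopped in the middle so that the two halves align with $\deg u$, the bijectivity of $\phi_\gamma$ and the inclusion-exclusion take care of the rest automatically.
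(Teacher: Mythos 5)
Your proof is correct and takes essentially the same approach as the paper: both split the $2d$ coefficients into two halves, identify the entry of $M_u^{2d}$ as the size of the intersection of $S$ with its image under the affine bijection $A \mapsto \gamma - x^d A$ (the paper's map $f$), and apply inclusion-exclusion in $\F_p[x]/(u)$. Your verification of the ``in particular'' clause via the case split $r \leq 2$ versus $r > 2$ matches the paper's use of $\min\{1,(r-1)^2\}$.
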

\begin{proof}
We write
\[ S_0 = \{a_{d - 1} x^{d - 1} + \ldots + a_0 \bmod{u} : a_0, a_1, \ldots, a_{d - 1} \in \F_p \setminus \{0\}\} \subseteq V(G_u). \]
Fix some $\alpha, \beta \in V(G_u)$.
Then the $(\beta, \alpha)$-entry $(M_u^{2d})_{\beta, \alpha}$ of $M_u^{2d}$ is the number of $2d$-tuples $(b_0, b_1, \ldots, b_{2d - 1}) \in (\F_p \setminus \{0\})^{2d}$ such that
\[ \alpha x^{2d} + b_{2d - 1} x^{2d - 1} + \ldots + b_0 \equiv \beta\pmod{u}, \]
    which is equivalent to
\[ \sum_{i = 0}^{d - 1} b_i x^i \equiv \beta - \alpha x^{2d} - \left(\sum_{i = 0}^{d - 1} b_{i + d} x^i\right) x^d\pmod{u}. \]
That is, we have
\[ (M_u^{2d})_{\beta, \alpha} = \#\{\gamma \in S_0 : \beta - \alpha x^{2d} - \gamma x^d \in S_0\}. \]

Consider the map $f : V(G_u) \to V(G_u)$ given by $\gamma \mapsto \beta - \alpha x^{2d} - \gamma x^d$.
Then the above equality simplifies to
\[ (M_u^{2d})_{\beta, \alpha} = \#\{\gamma \in S_0 : f(\gamma) \in S_0\} = |f(S_0) \cap S_0|. \]
On one hand, we have
\[ |f(S_0) \cap S_0| \leq |S_0| = (p - 1)^d. \]
On the other hand, $f$ is injective since $x$ is invertible mod $u$, so
\[ |f(S_0) \cap S_0| = |f(S_0)| + |S_0| - |f(S_0) \cup S_0| \geq 2 (p - 1)^d - p^d. \]
Thus $2 (p - 1)^d - p^d \leq (M_u^{2d})_{\beta, \alpha} \leq (p - 1)^d$, as desired. Dividing by $(p-1)^{2d}$ gives
\[ E_{\min}(\widetilde{M}_u^{2d}) \geq \max\left\{0,\frac{2}{(p - 1)^d} - \frac{p^d}{(p - 1)^{2d}}\right\} \quad \text{ and } \quad E_{\max}(\widetilde{M}_u^{2d}) \leq \frac{1}{(p - 1)^d}. \]
This means that
\begin{align*}
    E_{\min}(\widetilde{M}_u^{2d}) - \frac{1}{p^d}
        &\geq \max\left\{-\frac{1}{p^d},\frac{2}{(p - 1)^d} - \frac{p^d}{(p - 1)^{2d}} - \frac{1}{p^d}\right\}
        = -\frac{1}{p^d} \min\left\{1,\left(\left(\frac{p}{p - 1}\right)^d - 1\right)^2\right\} \\
    E_{\max}(\widetilde{M}_u^{2d}) - \frac{1}{p^d}
        &\leq \frac{1}{(p - 1)^d} - \frac{1}{p^d}
        = \frac{1}{p^d} \left(\left(\frac{p}{p - 1}\right)^d - 1\right).
\end{align*}
As a result, by definition of the normalized max norm \eqref{eq:Mmax},
\[ \|\widetilde{M}_u^{2d} - \tilde{J}_{p^d}\|_{\max} \leq \left(\frac{p}{p - 1}\right)^d - 1, \]
    as desired.
\end{proof}

\begin{proof}[Proof of Theorem~\ref{thm3:matrix-p-large-bound}] By Corollary \ref{cor3:DSmatrix-4}, we know that $$\|\widetilde{M}_u^n - \tilde{J}_{p^d}\|_{\max} \leq \|\widetilde{M}_u^{2d} - \tilde{J}_{p^d}\|^{\lfloor n/(2d)\rfloor}_{\max}\leq \left(\left(\frac{p}{p-1}\right)^d - 1\right)^{\lfloor n/(2d)\rfloor}.$$
Dividing by $p^d$ gives the bound 
\[ \delta_{n,p}(d) \leq \frac{1}{p^d} \left(\left(\frac{p}{p - 1}\right)^d - 1\right)^{\lfloor n/(2d) \rfloor}. \]
Suppose now $n\geq 16$ and $d\leq 4$. Then similar to the proof of Corollary \ref{cor3:DSmatrix-4}, we have
$$\|\widetilde{M}_u^n - \tilde{J}_{p^d}\|_{\max} = \|\widetilde{M}_u^{n-4d}(\widetilde{M}_u^{4d} - \tilde{J}_{p^d})\|_{\max}\leq\|(\widetilde{M}_u^{2d} - \tilde{J}_{p^d})^2\|_{\max}\leq \left(\left(\frac{p}{p-1}\right)^d - 1\right)^2 \ll\frac{1}{p^2}.$$
Dividing by $p^d$ gives the desired result.
\end{proof}

We now prove Theorem~\ref{thm3:matrix-p-small-bound}.
This time, we need:

\begin{lemma}\label{lem3:Mu-Emin-bound-p-small}
For any $u \in \F_p[x]_m$ of degree $d$ such that $x \nmid u$, the entries of $M_u^{d^2 + d}$ are all positive.
\end{lemma}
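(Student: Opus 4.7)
The plan is to recast the claim as a sumset covering problem in $V := \F_p[x]/(u)$ and then exploit the factorization $d^2 + d = d(d+1)$ by grouping paths into length-$d$ blocks.

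First, I would observe that for any $\alpha, \beta \in V$, the entry $(M_u^N)_{\beta,\alpha}$ counts the tuples $(a_1,\ldots,a_N) \in (\F_p\setminus\{0\})^N$ satisfying $\sum_{i=1}^N a_i x^{N-i} \equiv \beta - \alpha x^N \pmod{u}$. Hence all entries of $M_u^{d^2+d}$ are positive if and only if the Minkowski sumset $T_N := \sum_{k=0}^{N-1} x^k \cdot (\F_p\setminus\{0\}) \subseteq V$ equals $V$ for $N = d^2 + d$; here each $x^k \cdot (\F_p\setminus\{0\})$ has $p - 1$ elements because $x$ is a unit in $V$ (using $x \nmid u$). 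Grouping the $N$ summands into $d+1$ blocks of length $d$ gives
\[
T_{d^2+d} \;=\; T_d + x^d T_d + x^{2d} T_d + \cdots + x^{d^2} T_d,
\]
where $T_d := \sum_{k=0}^{d-1} x^k \cdot (\F_p\setminus\{0\})$ has exactly $(p-1)^d$ elements, since the representing polynomials $\sum_{k=0}^{d-1} b_k x^k$ with $b_k \neq 0$ have degree less than $d$ and are therefore distinct modulo $u$.

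Next I would translate the resulting combinatorial claim into polynomial language: given $\gamma \in V$ with degree-$(d-1)$ lift $\tilde\gamma \in \F_p[x]$, one must find $\tilde q = \sum_{j=0}^{d^2-1} t_j x^j \in \F_p[x]$ such that every coefficient of $\tilde\gamma + u \tilde q$ (namely those of $x^0, x^1, \ldots, x^{d^2+d-1}$) is nonzero. For $0 \leq k \leq d^2 - 1$ the coefficient of $x^k$ in $\tilde\gamma + u \tilde q$ is a linear function of $(t_0,\ldots,t_k)$ whose coefficient of $t_k$ is the constant term of $u$, which is nonzero by $x \nmid u$. A greedy construction setting $t_0, t_1, \ldots, t_{d^2-1}$ in order can therefore ensure the first $d^2$ coefficients of $\tilde\gamma + u\tilde q$ are nonzero, leaving $p - 1 \geq 2$ valid choices for each $t_j$ and hence $(p-1)^{d^2}$ greedy-valid tuples.

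The main obstacle will be the remaining $d$ ``boundary'' constraints: the coefficients of $x^{d^2}, x^{d^2+1}, \ldots, x^{d^2+d-1}$ in $u\tilde q$ depend only on $t_{d^2-d}, \ldots, t_{d^2-1}$, and as linear forms in these $d$ variables they form an upper-triangular system with $1$'s on the diagonal (coming from the leading coefficient of $u$). Simultaneously, the ``middle'' greedy constraints for the coefficients of $x^{d^2-d}, \ldots, x^{d^2-1}$ in the same variables form a lower-triangular system with the constant term of $u$ on the diagonal. The key technical step will be to show that among the $(p-1)^{d^2}$ greedy-valid tuples, at least one additionally makes all $d$ boundary coefficients nonzero. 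For $p$ large relative to $d$ this follows from a union bound over hyperplanes, but for small $p$ (such as $p = 3$ with $d$ large) the union bound fails and a more refined argument is needed, exploiting the Toeplitz structure of both triangular systems to explicitly construct a compatible assignment.
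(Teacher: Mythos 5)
Your reduction to the covering problem $T_{d^2+d} = V$ and the block decomposition $T_{d^2+d} = T_d + x^d T_d + \cdots + x^{d^2} T_d$ are both correct; $T_d$ is exactly the set the paper calls $S_0$, so up to that point you are on the same track as the paper's proof. However, there is a genuine gap at the step you yourself flag as the ``main obstacle.'' You correctly observe that the $d$ boundary constraints (on the coefficients of $x^{d^2},\dots,x^{d^2+d-1}$) and the last $d$ greedy constraints (on the coefficients of $x^{d^2-d},\dots,x^{d^2-1}$) both live on the same variables $t_{d^2-d},\dots,t_{d^2-1}$, forming interlocking lower- and upper-triangular systems, and you note that a union bound only works when $p$ is large. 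But for small $p$ you only promise ``a more refined argument \dots\ exploiting the Toeplitz structure'' without supplying it. That is not a loose end: it is the whole content of the lemma. (For $p > d^2+d$ the lemma is immediate, since one can avoid all $d^2+d$ hyperplanes in $\F_p^{d^2}$ by a crude union bound; the entire difficulty is uniformity down to $p=3$.)

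The paper resolves exactly this difficulty by staying with the block parametrization $(\gamma_1,\dots,\gamma_d)\in S_0^d$ rather than passing to $\tilde{q}$, and by building in a decoupling device. For each $j = 1,\dots,d$ it selects a default $\eta_j\in S_0$ together with a \emph{toggle direction} $x^{j(1-d)-1}$, chosen so that both $\eta_j$ and $\eta_j + x^{j(1-d)-1}$ lie in $S_0$. This is where $p>2$ enters: writing $x^{j(1-d)-1}\equiv\sum_i c_{j,i}x^i\pmod{u}$, one picks each coefficient $a_{j,i}$ of $\eta_j$ in $\F_p\setminus\{0,-c_{j,i}\}$. The crucial point is that toggling block $j$ (replacing $\eta_j$ by $\eta_j + x^{j(1-d)-1}$) changes $\gamma_j x^{jd}$ by $x^{j(1-d)-1}\cdot x^{jd}=x^{j-1}$ in $\F_p[x]/(u)$, hence shifts the $(j-1)$-st coefficient of the residual $\beta-\alpha x^{d^2+d}-\sum_{j=1}^d\gamma_j x^{jd}$ by $-1$ and touches no other coefficient. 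The $d$ residual coefficients can therefore be cleared of zeros one at a time and independently of each other, which completely removes the interaction between your two triangular systems. Your plan has no analogue of this independence; without such a device, the existence of an assignment for $t_{d^2-d},\dots,t_{d^2-1}$ satisfying all $2d$ relevant hyperplane-avoidance conditions is precisely what is left unproved when $p$ is small and $d$ is large.
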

\begin{proof}
Let $S_0$ be as in the proof of Theorem~\ref{thm3:Mu-Emin-bound-p-large}.
Fix some $\alpha, \beta \in V(G_u)$.
The $(\beta, \alpha)$-entry $(M_u^{d^2 + d})_{\beta, \alpha}$ of $M_u^{d^2 + d}$ is the number of $(d^2 + d)$-tuples $(b_0, b_1, \ldots, b_{d^2 + d - 1}) \in (\F_p \setminus \{0\})^{d^2 + d}$ such that
\[ \alpha x^{d^2 + d} + b_{d^2 + d - 1} x^{d^2 + d - 1} + \ldots + b_0 \equiv \beta\pmod{u}, \]
    which is equivalent to
\[ \sum_{i = 0}^{d - 1} b_i x^i \equiv \beta - \alpha x^{d^2 + d} - \sum_{j = 1}^d \left(\sum_{i = 0}^{d - 1} b_{i + jd} x^i\right) x^{jd}\pmod{u}. \]
Thus, it suffices to show that there exists $\gamma_1, \ldots, \gamma_d \in S_0$ such that
\[ \beta - \alpha x^{d^2 + d} - \sum_{j = 1}^d \gamma_j x^{jd} \in S_0. \]

Since $x\nmid u$, we know that $x^{-1}\in\F_p[x]/(u)$. First we find $\eta_1, \eta_2, \ldots, \eta_d \in S_0$ such that $\eta_j + x^{j(1 - d) - 1} \in S_0$ for each $j \leq d$.
Indeed, fix $j$, and let $c_{j, 0}, c_{j, 1}, \ldots, c_{j, d - 1} \in \F_p$ such that
\[ x^{j(1 - d) - 1} \equiv \sum_{i = 0}^{d - 1} c_{j, i} x^i \pmod{u}. \]
Since $p>2$, there exist $a_{j, i} \in \F_p \setminus \{0, -c_{j, i}\}$ for each $i < d$. Then we have
\[ \sum_{i = 0}^{d - 1} a_{j, i} x^i \in S_0 \quad \text{ and } \quad \sum_{i = 0}^{d - 1} (a_{j, i} + c_{j, i}) x^i \in S_0, \]
We take $\eta_j = a_{j, d - 1} x^{d - 1} + \ldots + a_{j, 1} x + a_{j, 0}$ so that $\eta_j + c_j x^{j(1 - d) - 1} \in S_0$ for both $c_j=0$ and $c_j =1$.
Write \[ \beta - \alpha x^{d^2 + d} - \sum_{j = 1}^d \eta_j x^{jd} \equiv a_{d - 1} x^{d - 1} + \dots + a_1 x + a_0 \pmod{u}\]
    for some $a_{d - 1}, \ldots, a_0 \in \F_p$.
For any $j = 1,\ldots,d$, we choose $c_j = 1$ if $a_{j-1} = 0$; and $c_j = 0$ otherwise. Then we see that \[ \beta - \alpha x^{d^2 + d} - \sum_{j = 1}^d (\eta_j + c_j x^{j(1 - d) - 1}) x^{jd} \equiv \sum_{j = 1}^d (a_{j-1} - c_j) x^{j - 1} \in S_0. \]
The lemma is proved by taking $\gamma_j = \eta_j + c_j x^{j(1 - d) - 1} \in S_0$.
\end{proof}

\begin{proof}[Proof of Theorem~\ref{thm3:matrix-p-small-bound}]
Consider an arbitrary $u \in \F_p[x]_m$ of degree $d$ such that $x \nmid u$.
By Lemma~\ref{lem3:Mu-Emin-bound-p-small}, we have
\[ E_{\min}\left(\widetilde{M}_u^{d^2 + d}\right) \geq \frac{1}{(p - 1)^{d^2 + d}} \geq \frac{1}{p^{d^2 + d}}. \]
By Lemma~\ref{lem3:Emin-product-stochastic-bound} and Lemma~\ref{lem3:DSmatrix-2},
\[ E_{\min}(\widetilde{M}_u^n) \geq \frac{1}{p^d}
\left(1 - \Big(1 - \frac{p^d}{p^{d^2 + d}}\Big)^{\lfloor n/(d^2 + d) \rfloor}\right) \geq \frac{1}{p^d} - \frac{1}{p^d} \exp\left(-\frac{\lfloor n/(d^2 + d) \rfloor}{p^{d^2}}\right). \]
Since $\widetilde{M}_u^n$ is doubly stochastic, it then follows that
\[ E_{\max}(\widetilde{M}_u^n) \leq \frac{1}{p^d} + \frac{p^d - 1}{p^d}\exp\left(-\frac{\lfloor n/(d^2 + d) \rfloor}{p^{d^2}}\right) \leq \frac{1}{p^d} + \exp\left(-\frac{\lfloor n/(d^2 + d) \rfloor}{p^{d^2}}\right). \]
As a result, the entries of $\widetilde{M}_u^n$ are of distance at most $\displaystyle \exp\left(-\frac{\lfloor n/(d^2 + d) \rfloor}{p^{d^2}}\right)$ from $p^{-d}$.
Since this holds for all $u \in \F_p[x]_m$ of degree $d$ such that $x \nmid u$,
\[ \delta_{n,p}(d) \leq \exp\left(-\frac{\lfloor n/(d^2 + d) \rfloor}{p^{d^2}}\right) \leq e^{p^{-d^2}} \exp\left(-\frac{n}{(d^2 + d) p^{d^2}}\right). \]
We are done, since $d \geq 1$ and $p \geq 3$.
\end{proof}

\subsection{Proof of Theorem~\ref{thm:polydisc} for squarefree density}

We first consider $P^{\sqf}_{n,p}$. By Theorem \ref{thm:Psqf}, it remains to prove that when $n\geq 16$,
\begin{equation}
    \sum_{d = 0}^n p^{d/2} \delta_{n,p}(d) \ll \min\left\{\left(\frac{p}{(p - 1)^2}\right)^{5/2}, \left(\frac{p}{(p - 1)^2}\right)^{\lfloor\sqrt{\log n/\log p}\rfloor/2}\right\}, \label{eq4:local-sqfree-discr-error} 
\end{equation} 
    with the implied constant being absolute.

Using the bound $\delta_{n,p}(d)\ll 1/p^{d+2}$ for $d\leq 4$ from Theorem \ref{thm3:matrix-p-large-bound} and the trivial bound \eqref{eq:triv}, we have
\[ \sum_{d = 0}^n p^{d/2} \delta_{n,p}(d) \ll \sum_{d = 1}^4 \frac{p^{d/2}}{p^{d + 2}} + \sum_{d = 5}^{\infty} \frac{p^{d/2}}{(p - 1)^d} \ll \frac{1}{p^{5/2}} + \left(\frac{p}{(p - 1)^2}\right)^{5/2} \ll \left(\frac{p}{(p - 1)^2}\right)^{5/2}. \]
Suppose now $\log n/\log p \geq 25$, which is when the second term in \eqref{eq4:local-sqfree-discr-error} is smaller. Let $N_0 = \lfloor\sqrt{\log n/\log p}\rfloor$. By Theorem \ref{thm3:matrix-p-small-bound} and the trivial bound \eqref{eq:triv}, we have
\begin{eqnarray*}
    \sum_{d = 0}^n p^{d/2} \delta_{n,p}(d)
    &\ll& \sum_{d = 1}^{N_0 - 1} p^{d/2} \exp\left(-\frac{n}{d(d + 1) p^{d^2}}\right) + \sum_{d = N_0}^{\infty} \frac{p^{d/2}}{(p - 1)^d} \\
    &\ll& p^{N_0/2} \exp\left(-\frac{n}{(N_0 - 1) N_0 p^{(N_0 - 1)^2}} + N_0 \right) + \left(\frac{p}{(p - 1)^2}\right)^{N_0/2}.
\end{eqnarray*}
Choosing $N_0^2\leq\log n/\log p$ guarantees that
$$\frac{\log n}{\log p} \geq (N_0 - 1)^2 + 2(N_0 - 1) + 1 \geq (N_0 - 1)^2 + \frac{3\log N_0}{\log p} + \frac{\log(\log(p-1)+1)}{\log p}$$
which implies that
$$n\geq p^{(N_0 - 1)^2}N_0^3(\log(p-1)+1)$$
and so
$$-\frac{n}{(N_0 - 1)N_0p^{(N_0 - 1)^2}} \leq -N_0\log(p-1) - N_0.$$
Hence, we have $$\sum_{d = 0}^n p^{d/2} \delta_{n,p}(d)
    \ll\left(\frac{p}{(p - 1)^2}\right)^{\lfloor\sqrt{\log n/\log p}\rfloor/2}.$$

For $n\geq 16$, since the product
$$\prod_p \left(1 - \frac{3p-1}{p(p+1)^2} + O\left(\Big(\frac{p}{(p-1)^2}\Big)^{5/2}\right)\right)$$
converges, we conclude from the Lebesgue dominated convergence theorem that
$$\lim_{n\rightarrow\infty}\left(\prod_{p>2}P^{\sqf}_{n,p}\right) = \prod_{p>2}\left(\lim_{n\rightarrow\infty}P^{\sqf}_{n,p}\right) = \prod_{p>2} \left(1 - \frac{3p-1}{p(p+1)^2}\right).$$

For arbitrary $n\geq 2$ and any prime $p>2$, we know that $x^{n+1} - 1$ is squarefree in $\F_p[x]$ if $p\nmid n+1$ and $x^{n+1} + x - 2$ is squarefree in $\F_p[x]$ if $p\mid n+1$. Dividing by $x - 1$, we see that either $x^n + x^{n-1} + \cdots + x + 1$ or $x^n + x^{n-1} + \cdots + x + 2$ has squarefree discriminant and both have all their coefficients not divisible by $p$. Hence by the last statement of Theorem \ref{thm:primesieve}, we know that $C_{n,2}^{\sqf}$ is positive.

\subsection{Proof of Theorem~\ref{thm:polydisc} for maximality density}

We prove the similar statements for $P^{\max}_{n,p}$. By Theorem \ref{thm:Pmax}, it remains to prove that when $n\geq 16$,
\begin{equation}
    \sum_{d = 0}^{\lfloor n/2 \rfloor} \delta_{n,p}(2d) \ll \min\left\{\frac{1}{(p - 1)^4}, \frac{1}{(p - 1)^{2\lfloor\sqrt{\log n/(4\log p)}\rfloor}}\right\}, \label{eq4:local-maximality-error}
\end{equation}
    with the implied constant being absolute.

For the first bound, we use the estimate $\delta_{n,p}(2) \ll 1/p^{4}$ by Theorem~\ref{thm3:matrix-p-large-bound} and the trivial bound $\delta_{n,p}(2d) \leq 1/(p - 1)^{2d}$ to obtain
\[ \sum_{d = 0}^{\lfloor n/2 \rfloor} \delta_{n,p}(2d) \ll \frac{1}{p^4} + \sum_{d = 2}^{\infty} \frac{1}{(p - 1)^{2d}} \ll \frac{1}{(p - 1)^4}. \]

Now we prove the second bound when $\log n\geq 16\log p$. Let $N_0 = \lfloor \sqrt{\log n/(4 \log p)} \rfloor$.
By Theorem~\ref{thm3:matrix-p-small-bound} and the trivial bound \eqref{eq:triv}, we have
\begin{eqnarray*}
    \sum_{d = 0}^{\lfloor n/2 \rfloor} \delta_{n,p}(2d)
    &\ll& \sum_{d = 1}^{N_0 - 1} \exp\left(-\frac{n}{2d(2d + 1) p^{4d^2}}\right) + \sum_{d = N_0}^{\infty} \frac{1}{(p - 1)^{2d}} \\
    &\ll& \exp\left(-\frac{n}{(2 N_0 - 2)(2 N_0 - 1) p^{4(N_0 - 1)^2}}+2N_0\right) + \frac{1}{(p - 1)^{2 N_0}}.
\end{eqnarray*}
Choosing $N_0^2\leq \log n/(4\log p)$ guarantees
$$\frac{\log n}{\log p}\geq 4(N_0 - 1)^2 + \frac{3\log(2N_0)}{\log p} + \frac{\log(\log(p-1)+1)}{\log p}$$
which implies that
$$n\geq p^{4(N_0 - 1)^2}(8N_0^3)(\log(p-1) + 1)$$
and so
$$-\frac{n}{(2N_0-2)(2N_0-1)p^{4(N_0 - 1)^2}} \leq -2N_0(\log(p-1)) - 2N_0.$$
Hence
$$ \sum_{d = 0}^{\lfloor n/2 \rfloor} \delta_{n,p}(2d) \ll \frac{1}{(p - 1)^{2 N_0}}.$$

Finally, the proofs for the convergence and the positivity of $C^{\max}_{n,2}$ are exactly the same as those for $C^{\sqf}_{n,2}$ in \S4.3.

\subsection*{Acknowledgments}
The first named author is supported by the University of Waterloo through an MURA project. The second and third named authors are supported by an NSERC Discovery Grant.

\end{document}